\documentclass[11pt]{amsart}
\usepackage[letterpaper, margin=1in]{geometry}

\usepackage{hyperref}
\usepackage{cleveref}
\usepackage[disable,colorinlistoftodos,bordercolor=orange,backgroundcolor=orange!20,linecolor=orange,textsize=scriptsize]{todonotes}

\usepackage{graphicx, url,enumerate}
\usepackage{enumitem}
\usepackage{amssymb,mathtools}

 \renewcommand{\ge}{\geqslant}
 \renewcommand{\le}{\leqslant}

\usepackage{algorithm}
\usepackage{algpseudocode}
\usepackage{color}
\usepackage{tikz}
\usetikzlibrary{3d,calc}
\usepackage{eulervm} 

\theoremstyle{plain}
\newtheorem{theorem}{Theorem}

\newtheorem{lemma}{Lemma}
\newtheorem{corollary}{Corollary}

\theoremstyle{definition}
\newtheorem{definition}{Definition}
\newtheorem{example}{Example}
\newtheorem{problem}{Problem}

\theoremstyle{remark}

\def\<#1>{\langle #1\rangle}

\DeclareMathOperator{\tr}{Tr}
\DeclareMathOperator{\diag}{diag}

\DeclareMathOperator{\V}{V}

\DeclareMathOperator{\supp}{supp}

\DeclareMathOperator{\range}{range}

\newcommand{\N}{\mathbb{N}}
\newcommand{\C}{\mathbb{C}}

\newcommand{\R}{\mathbb{R}}

\newcommand{\x}{\mathbf{x}}

\newcommand{\y}{\mathbf{y}}
\newcommand{\z}{\mathbf{z}}

\renewcommand{\V}{\mathbf{V}}

\newcommand{\cB}{\mathcal{B}}

\newcommand{\cH}{\mathcal{H}}

\newcommand{\cK}{\mathcal{K}}

\newcommand{\cS}{\mathcal{S}}

\newcommand{\bu}{\mathbf{u}}

\newcommand{\bv}{\mathbf{v}}

\newcommand{\rH}{\mathrm{H}}

\newcommand{\rS}{\mathrm{S}}

\newcommand{\0}{\mathbf{0}}
\newcommand{\1}{\mathbf{1}}

\begin{document}
\title{A generalization of Frenkel's formula}
\author{Shmuel  Friedland}
\date{February 15,  2026}
\address{
 Department of Mathematics, Statistics and Computer Science,
 University of Illinois at Chicago, Chicago, IL 60607-7045,
 USA, \texttt{friedlan@uic.edu},
 }
 \subjclass[2010]{
81P47,94A40}

\keywords{
Frenkel's formula,  divergence, operator divergence,  $p$-Schatten operators.
}

\begin{abstract} 
We generalize Frenkel's integral formula for traces of operators to operators.
The resulting formula holds for bounded self-adjoint positive operators and $p$-Schatten  class of compact positive operators.
\end{abstract}
 \maketitle

 \section{Introduction}\label{sec:intro} 
 Denote: $\C^{n\times n}\supset \rH_n\supset \rH_{n,+}\supset \rH_{n,++}$ the complex space of $n\times n$ matrices, the real space of $n\times n$ Hermitian matrices, the cone of positive semidefinite  matrices, and the open set of positive definite matrices.   Let $\rH_{n,+,1}\supset\rH_{n,++,1}$ be the convex set  and the open subset of positive density matrices of order $n$ respetively.   
 For $A,B\in\rH_n$ we denote $\lambda_1(A)\ge \ldots \ge \lambda_n(A)$ the eigenvalues of $A$, and
 $A>B (A\ge B)$ if $A-B\in \rH_{n,++} (A-B\in\rH_{n,+})$. Here, for any $T\in \rH_n$ we denote by $\{T>0\}$ the projection on the positive spectrum of $T$, and 
\begin{equation}\label{defT+}
T_+=T\{T>0\}, \quad T_-=T\{-T>0\}, \quad T=T_+-T_-.
\end{equation}
 
The von Neumann entropy of $\rho\in \rH_{n,+,1}$ is $-\tr \rho\log\rho$, and it is equal to the Shannon entropy of the eigenvalue set of $\rho$.  The Umegaki relative entropy of $\rho$ with respect to $\sigma$, (the divergence of $\rho$ with respect to $\sigma$), is $\tr \rho(\log\rho-\log\sigma)$ \cite{Ume62}.    It is convenient to define the divergence of $A,B\in \rH_{n,+}$ as \cite[eq. (3)]{CGLL25}:
 \begin{equation}\label{DAB}
 D(A\|B):=\tr \big(A(\log A-\log B))-A+B\big).
 \end{equation}
 Furthermore, for $B>0$ let 
 \begin{equation}\label{Dlog}
 D\log[B](A):=\lim_{t\to 0}\frac{\log (B+tA)-\log B}{t}.
 \end{equation}
 
 For numerical computations we have the following well known observations: the von Neumann entropy of $A$ can be computed by finding the eigenvalues of $A$;  $D(A\|B)$ can be found by using the spectral decompositons of $A$ and $B$.    A less known observation is that $D\log[B](A)$ can be found by using the spectral decomposition of $B$ as follows: Find a unitary matrix $U$ such that $U^*B U=\diag(b_1,\ldots,b_n)$ and let $U^*AU=[c_{ij}]$ be Hermitian matrices of order $n$.  Then \cite[Theorem V.3.3]{Bha97} or \cite[eq. (5)]{FG11}
 state:
\begin{equation}\label{FGeq}
D\log[B] (A)=U([c_{ij}\frac{\log b_i-\log b_j}{b_i-b_j}]U^*, \quad \frac{\log b-\log a=b}{b-b}=\frac{1}{b}\textrm{ for } b>0.
\end{equation}
Observe that 
\begin{equation}\label{ADBeq}
\tr B D\log[B] (A)=\tr A, \quad D\log[B] (B)=I_n,
\end{equation}
where $I_n\in\rH_n$ is the identity matrix.
In many theoretical applications physicists prefer integral formulas for the above and similar functions.  A new notable formula that attracted recent activity in quantum information theory (QIT) is Frenkel's integral formula for $D(A\|B)$ \cite{Fre23}:
\begin{equation}\label{Fr}
D(A\|B)=\int_{-\infty}^\infty \frac{dt}{|t|(t-1)^2}\tr \big((1-t)A+tB\big)_-
\end{equation}
See for example \cite{CL25,CGLL25,Jen24,LHC25}.

The aim of this note is  to give the following generalization of \eqref{Fr} to operators:
\begin{equation}\label{Frg}
A(\log A-\log B)-BD\log [B](A)+B=\int_{-\infty}^\infty \frac{dt}{|t|(t-1)^2} \big((1-t)A+tB\big)_-,\quad A\ge 0, B>0.
\end{equation} 
We call the left hand side of this formula the divergence operator:
\begin{equation}\label{divo}
\Delta(A\|B):=A(\log A-\log B)-BD\log [B](A)+B.
\end{equation}
Let
\begin{equation}\label{Og}
O_{\gamma}(A\|B)=(A-\gamma B)_+.
\end{equation}
We show the following equivalent formula analogous to  \cite[eq. (1.6)]{HT24},\cite[Proposition 4.2]{LHC25} and \cite[eq. (10)]{CGLL25}:
\begin{equation}\label{Frg1}
\Delta(A\|B)=\int_1^{\infty} \big(\gamma^{-1} O_{\gamma}(A\|B)+\gamma^{-2}O_{\gamma}(B\|A)\big)d\gamma ,\quad A\ge 0, B>0.
\end{equation} 

It is easy to verify these equalities if $AB=BA$, by assuming that both $A$ and $B$ are diagonal matrices.   The surprising fact that these equalites  hold for any $A,B>0$.
We suspect that there are other integral formulas in QIT where the trace can be replaced by the corresponding operators.
\section{Proof of \eqref{Frg}}\label{sec:prf}
Corollary \ref{corR} yields that for $A,B\in\rH_n$,  the matrix $(A+tB)_+$, hence $(A+tB)_-=(A+tB)_ + -A-tB$, is continuous in $t$ on the real line $\R$.  Furthermore,  there exists as finite number of points $Z\subset\R$, possibly an empty set, such that  $(A+tB)_+$ is analytic on each connected interval of $\R\setminus Z$.  Hence all the integrals in section \ref{sec:intro} make sense, provided that they converge.

As in \cite{Jen24,LHC25,CGLL25} we give an alternative form of the right hand side of \eqref{Frg}.   
\begin{equation}\label{fABeq}
\int_{-\infty}^\infty \frac{dt}{|t|(t-1)^2} \big((1-t)A+tB\big)_-=\int_1^{\infty} \big(\gamma^{-1} O_{\gamma}(A\|B)+\gamma^{-2}O_{\gamma}(B\|A)\big)d\gamma.
\end{equation}
First observe that if $A\ge 0 , B\ge 0$ then $((1-t)A+tB)_-=0$ for $t\in[0,1]$.  Hence,
\begin{equation*}
\int_{0}^\infty \frac{dt}{t(t-1)^2}\big((1-t)A+tB\big)_-=\int_{1}^\infty \frac{dt}{t(t-1)^2}\big((1-t)A+tB\big)_-.
\end{equation*}
Introduce a new variable $\gamma=\frac{t}{t-1}$ on the interval $(1,\infty)$, and recall that $(-C)_-=C_+$ to deduce
\begin{equation*}
\begin{aligned}
&\frac{dt}{t(t-1)^2}\big((1-t)A+tB\big)_-=\frac{dt}{t(t-1)^2}\left((t-1)(-1)\big(A-\frac{t}{t-1)}B\big)\right)_-=\\
&\frac{dt}{t(t-1)}\big(A-\gamma B\big)_+=-\frac{d\gamma}{\gamma}\big(A-\gamma B\big)_+.
\end{aligned}
\end{equation*}
Hence,
\begin{equation*}
\int_{0}^\infty \frac{dt}{t(t-1)^2}\big((1-t)A+tB\big)_-=\int_1^\infty \frac{O_\gamma(A\|B)}{\gamma}d\gamma.
\end{equation*}
On the interval $(-\infty,0)$ we introduce a new variable $\gamma=-\frac{1-t}{t}$.
Observe 
\begin{equation*}
\begin{aligned}
&\big((1-t)A+tB\big)_-=-t\big(B-\gamma A)_+\textrm{ for } t<0\Rightarrow\\ 
&\int_{-\infty}^0 -\frac{dt}{t(1-t)^2}\big((1-t)A+tB\big)_-=\int_1^\infty \frac{d\gamma}{\gamma^2}O_\gamma(B\|A).
\end{aligned}
\end{equation*}
This establishes \eqref{fABeq}.

To prove \eqref{Frg} we follow the proof of Frenkel's integral formula in \cite[Section 4.1]{LHC25}.  Recall \cite[eq. (77)]{LHC25} and \cite[Theorem B.1]{CL25} for $A,B>0$:
\begin{equation}\label{logAB}
\begin{aligned}
&\log A-\log B=\int_1^\infty \big(\{A-\gamma B>0\}-\{B-\gamma A>0\}\big)\frac{1}{\gamma}d\gamma,\\
&D\log [A](B)=\int_0^\infty \{B-\gamma A>0\}d\gamma.
\end{aligned}
\end{equation}
Hence
\begin{equation}\label{eq1}
\begin{aligned}
&A(\log A-\log B)=\int_1^\infty \big(A\{A-\gamma B>0\}-A\{B-\gamma A>0\}\big)\frac{1}{\gamma}d\gamma=u+v-w,\\
&u=\int_1^\infty \big(\gamma^{-1}(A-\gamma B)\{A-\gamma B>0\}+ \gamma^{-2} (B-\gamma A)\{B-\gamma A>0\}\big)d\gamma,\\
&v=\int_1^\infty B\{A-\gamma B>0\}d\gamma,  \quad w=\int_1^\infty \gamma^{-2} B\{B-\gamma A>0\}d\gamma.
\end{aligned}
\end{equation}
Clearly, $u$ is equal to the right hand side of \eqref{fABeq}.  Use the second equality of  \eqref{logAB} to deduce that  
\begin{equation*}
v=BD\log[B](A)-\int_0^1 B\{A-\gamma B>0\}d\gamma.
\end{equation*}
We now evaluate $w$.  Observe that on the interval $(0,1)$ one has the equality 
$\{-A+\alpha B>0\}=I_n -\{A-\alpha B>0\}$ except a finite number of points.  (Here, 
$I_n$ is the identity matrix of order $n$.)
Hence,  by changing to the variable $\alpha=\gamma^{-1}$ for $\gamma\in(1,\infty)$ we obtain
\begin{equation*}
w=\int_0^1 B\{\alpha B -A>0\} d\alpha=\int_0^1  B(I_n-\{A-\alpha B>0\}d\alpha=B-\int_0^1 B\{A-\alpha B>0\}d\alpha.
\end{equation*}
Combine all the above equalities to deduce \eqref{Frg} for $A,B>0$.  Fix $B>0$ and let $A_k>0,k\in N$ and let $\lim_{k\to\infty} A_k=A\ge 0$.  This shows that \eqref{Frg} holds $A\ge 0$ and $B>0$.   One can generalize further as in \cite[Theorem C.1]{CL25}
to matrices to
\begin{equation}\label{gen}
0\le A\le \tau B \textrm{ for some } \tau\ge 1.
\end{equation}
For completeness we bring the following lemma
\begin{lemma}\label{conv} Let $A\in\rH_{n,+},B\in \rH_n$.  Then
\begin{enumerate}
\item The integral $f(t):=\int_1^{t}\gamma^{-2}O_{\gamma}(B\|A) d\gamma$ converges to $F\in\rH_{n,+}$ as $t\to\infty$.
\item Assume that \eqref{gen} holds.  Then 
$$\int_1^\infty \gamma^{-1}O_{\gamma}(A\|B)d\gamma=\int_1^\tau \gamma^{-1}O_{\gamma}(A\|B)d\gamma.$$
\item  Assume that $A,B\ge 0$ and \eqref{gen} does not hold.  Then, $\supp A\not\subseteq \supp B$.  That is, there exists 
$\x\in\C^n, $ such that $ B\x=\0$ and $\x^* A \x>0,  \x^* B\x=0$.  Hence,
\begin{equation}\label{ninft}
\lim_{t\to\infty}\|\int_1^t \big(\gamma^{-1}O_\gamma(A\|B)+\gamma^{-2}O(B\|A)\big)d\gamma\|=\infty.
\end{equation}
\end{enumerate}
\end{lemma}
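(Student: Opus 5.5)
Part (1) will come from a monotone-convergence argument, (2) from the fact that $A-\gamma B\le 0$ once $\gamma\ge\tau$, and (3) from a quadratic-form lower bound along a vector $\x$ in $\ker B$ with $\x^*A\x>0$.

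For (1), each $O_\gamma(B\|A)=(B-\gamma A)_+$ is positive semidefinite. By Corollary \ref{corR} it is continuous in $\gamma$, so $f(t)$ is a nondecreasing family in $\rH_{n,+}$. It therefore suffices to bound $\tr f(t)$ uniformly in $t$. Write $C_\gamma:=B-\gamma A\le B$ because $A\ge 0$. Eigenvalue interlacing (Weyl monotonicity) gives $\lambda_i(C_\gamma)\le \lambda_i(B)$, hence
\[
\tr (B-\gamma A)_+=\sum_i \lambda_i(C_\gamma)_+\le \sum_i \lambda_i(B)_+=\tr B_+ .
\]
Consequently $\tr f(t)\le \tr B_+\int_1^\infty\gamma^{-2}d\gamma=\tr B_+$. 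A nondecreasing net of positive semidefinite matrices with bounded trace converges; concretely, $\x^*f(t)\x$ is monotone and bounded for every $\x$, and polarization then gives convergence of all entries. The limit $F$ lies in $\rH_{n,+}$.

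For (2), under \eqref{gen} we have $A-\gamma B\le \tau B-\gamma B=(\tau-\gamma)B\le 0$ whenever $\gamma\ge\tau$, using $B\ge 0$, which follows from $0\le A\le\tau B$. Hence $O_\gamma(A\|B)=0$ for $\gamma\ge\tau$, and the integral truncates at $\tau$.

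For (3), I first establish the support statement. If $\supp A\subseteq\supp B$, then on $\supp B$ the compression of $B$ is positive definite with minimal positive eigenvalue $\beta>0$, so $B\ge \beta P$, where $P$ is the projection onto $\supp B$. Also $A=PAP\le \|A\|P\le (\|A\|/\beta)B$, so \eqref{gen} holds with $\tau=\max(1,\|A\|/\beta)$. Contrapositively, failure of \eqref{gen} gives $\x\in\ker B$ with $\x^*A\x>0$; otherwise $\ker B\subseteq\ker A$, i.e. $\supp A\subseteq\supp B$.

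For the divergence, all integrands are positive semidefinite, so it suffices to show $\x^*\big(\int_1^t\gamma^{-1}O_\gamma(A\|B)d\gamma\big)\x\to\infty$, since the norm dominates $\x^*(\cdot)\x/\|\x\|^2$. The key inequality is $\x^*T_+\x\ge \x^*T\x$ for any Hermitian $T$, which holds because $T_+\ge T$. With $T=A-\gamma B$ and $B\x=\0$ this yields $\x^*O_\gamma(A\|B)\x\ge \x^*A\x=:a>0$ for every $\gamma$. Integrating gives a lower bound $a\log t\to\infty$. The second summand is nonnegative, and by (1) it is even bounded, so \eqref{ninft} follows.

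The only delicate point I anticipate is the trace bound in (1), namely justifying $\tr(B-\gamma A)_+\le\tr B_+$ via Weyl's monotonicity. The rest is direct.
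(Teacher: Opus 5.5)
Your proof is correct. Parts (2) and (3) follow the paper's argument. In (2) you use the truncation $A-\gamma B\le(\tau-\gamma)B\le 0$ for $\gamma\ge\tau$. In (3) you use the lower bound $\x^*(A-\gamma B)_+\x\ge\x^*A\x>0$ for $\x\in\ker B$. Your explicit $\tau=\max(1,\|A\|/\beta)$, obtained from $B\ge\beta P$, just spells out the paper's step of restricting $A,B$ to $\V^\perp$. You are also more careful than the paper in noting that the second summand is positive semidefinite and so cannot cancel the divergence.

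The genuine difference is in (1). The paper compresses $B-\gamma A$ by its positive spectral projection to get the uniform operator-norm bound $\|O_\gamma(B\|A)\|\le\|B\|$, then runs a Cauchy estimate $\|f(t)-f(s)\|\le\|B\|/s$. You instead use that $f(t)$ is nondecreasing in the Loewner order and bound its trace by Weyl monotonicity, $\tr(B-\gamma A)_+\le\tr B_+$, so convergence follows from monotonicity with no Cauchy argument.

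Each route buys something different:
\begin{itemize}
\item Yours gives trace-norm control. That is the natural quantity for the scalar formula \eqref{Fr} and for the $\cS_1$ setting of Theorem \ref{thm2}.
\item The paper's gives a dimension-free operator-norm rate. This is what gets reused for bounded operators in part (1) of Theorem \ref{thm3}. There $\tr B_+$ may be infinite, and a bounded monotone family would a priori converge only strongly.
\end{itemize}
Your Weyl argument also gives $\|(B-\gamma A)_+\|=\max(\lambda_1(B-\gamma A),0)\le\max(\lambda_1(B),0)\le\|B\|$. So you recover the paper's estimate at no extra cost.
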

\begin{proof} (\emph{1}) 
Corollary \ref{corR} yields that $O_\gamma(A\|B), O_\gamma(B\|A)$ are continuous on $\R$.  Hence $f(t)$ is well defined.  Observe next that for $A\ge 0,\gamma\ge 0$:
\begin{equation}\label{BAg}
\begin{aligned}
&0\le O_\gamma(B\|A)=(B-\gamma A)_+=\{B-\gamma A>0\}(B-\gamma A)\{B-\gamma A>0\}=\\
&\{B-\gamma A>0\}B\{B-\gamma A>0\} -\gamma \{B-\gamma A>0\}A\{B-\gamma A>0\}\le\\
&\{B-\gamma A>0\}B\{B-\gamma A>0\}\le \|B\|I_n\Rightarrow \|O_\gamma(B\|A)\|\le \|B\|
\end{aligned}
\end{equation}

Hence,
\begin{equation*}
\ 1\le s\le t \Rightarrow \|f(t)-f(s)\|\le \|B\|\int_s^t\gamma^{-2}d\gamma\le  \|B\|\int_{s}^\infty\gamma^{-2}d\gamma=\frac{\|B\|}{s}.
\end{equation*}
Therefore, $f(m), m\in\N$ is a Cauchy sequence that converges to $F\in\rH_{n,+}$.  Furthermore, the above inequality proves \emph{(1)}.

\noindent\emph{(2)}  Assume \eqref{gen} holds. For $t\ge \tau$ we have
\begin{equation*}
A-\tau B=A-\tau B-(t-\tau)B\le 0\Rightarrow (A-\tau B)_+=0\Rightarrow
\int_1^\infty \gamma^{-1}O_{\gamma}(A\|B)d\gamma=\int_1^\tau \gamma^{-1}O_{\gamma}(A\|B)d\gamma.
\end{equation*}

\noindent\emph{(3)}  Assume that $A,B\ge 0$ and \eqref{gen} does not hold. 
Hence $B$ is singular.  Let $\V\subset \C^n$ be the eigenspace of $B$ corresponding to $0$-eigenvalue.  Suppose that the sesquilinear form $\x^* A\x$ vanishes on $\V$.  As $A\ge 0$ it follows that $A\V=\{\0\}$.  By restricting $A$ and $B$ to the orthogonal complement $\V^\perp$ of $\V$ we deduce that \eqref{gen} holds contrary to ours assumption.  Hence,  there exists $\x\in\V$ such that  $B\x=0$ and $\x^* A\x>0$.  Therefore,
$$0<\x^* A\x=\x^*(A-\gamma B)\x\le  \x^*( A-\gamma B)_+\x\Rightarrow \lim_{t\to\infty}\int_1^{t} \gamma^{-1}\x^* O(A\| B)_\gamma \x d\gamma=\infty.$$
\end{proof}
\begin{theorem}\label{thm1} Assume that $A,B\in \rH_{n,+}$.  Then the following dichotomy holds:
\begin{enumerate}
\item Assume that there exists $\tau\ge 1$ such that $A\le \tau B$.  Then $\Delta(A\|B)\in \rH_{n,+}$, and equality \eqref{Frg1} holds,where 
\begin{equation}\label{zfor}
\Delta(A\|B)=\Delta(A_1\|B_1)\oplus 0,  \quad A_1=A|B\C^n, B_1=B|B\C^n.
\end{equation}
\item Assume that there is no $\tau\ge 1$ such that $A\le \tau B$.   Then the right side 
of \eqref{Frg1} converges to an unbounded nonnegative operator.  More precisely,  there exists $\x\in \C^n$  such that $B\x=0$ and $\x^*A\x>0$.  Hence, \eqref{ninft} holds.
In particular,  in equality \eqref{Fr} both sides are equal to $\infty$.
\end{enumerate}
\end{theorem}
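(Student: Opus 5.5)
The plan is to reduce part (1) to the already established case $B>0$ by restricting to the range of $B$, and to obtain part (2) directly from Lemma \ref{conv}.

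\emph{Part (1).} Suppose $A\le \tau B$ with $\tau\ge 1$. Write $\C^n=\range B\oplus \ker B$. For $\x\in\ker B$ we get $0\le \x^*A\x\le \tau\x^*B\x=0$. Since $A\ge 0$, this forces $A\x=\0$. Hence $A=A_1\oplus 0$ and $B=B_1\oplus 0$, where $A_1,B_1$ are the compressions to $B\C^n$, $B_1>0$ on that subspace, and $A_1\ge 0$.

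Every quantity in \eqref{Frg1} respects this block decomposition:
\[
O_\gamma(A\|B)=O_\gamma(A_1\|B_1)\oplus 0,\qquad O_\gamma(B\|A)=O_\gamma(B_1\|A_1)\oplus 0 .
\]
Both identities hold because $(T_1\oplus 0)_+=(T_1)_+\oplus 0$. So the right side of \eqref{Frg1} for $(A,B)$ is the right side for $(A_1,B_1)$, padded by zero.

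On the left side, $\Delta(A\|B)$ is only meaningful for singular $B$ through the conventions $\log 0\cdot 0=0$ and $B\,D\log[B](A)$ acting on $\range B$. With these conventions we take \eqref{zfor} as the definition (or check it from the formulas), giving $\Delta(A\|B)=\Delta(A_1\|B_1)\oplus 0$.

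Applying the already proved identity \eqref{Frg1} (equivalently \eqref{Frg} with \eqref{fABeq}) to $A_1\ge 0$, $B_1>0$ yields equality \eqref{Frg1} for $(A,B)$. Convergence of the right side is guaranteed by Lemma \ref{conv}(1),(2): the first integral is truncated at $\tau$ and the second converges absolutely.

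Positivity $\Delta(A\|B)\in\rH_{n,+}$ follows from the right side of \eqref{Frg1}. Its integrand $\gamma^{-1}(A-\gamma B)_++\gamma^{-2}(B-\gamma A)_+$ is positive semidefinite for each $\gamma$, and the cone $\rH_{n,+}$ is closed. So the convergent integral lies in $\rH_{n,+}$.

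\emph{Part (2).} If no $\tau\ge1$ satisfies $A\le\tau B$, then Lemma \ref{conv}(3) supplies $\x$ with $B\x=\0$ and $\x^*A\x>0$, and it gives \eqref{ninft}. The integrand is positive semidefinite, so the partial integrals increase in the Loewner order. Their norms are unbounded, so they increase to an unbounded nonnegative form; in the direction $\x$ it diverges like $\x^*A\x\log t$.

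For the statement about \eqref{Fr}, take traces: $\tr$ of the right side of \eqref{fABeq} is at least $\int_1^t\gamma^{-1}\x^*A\x\,d\gamma/\|\x\|^2\to\infty$ (with $\|\cdot\|$ the Euclidean norm, so $\x^*T\x\le\|\x\|^2\tr T$ for $T\ge 0$). On the left, $D(A\|B)=\infty$ by the standard convention when $\supp A\not\subseteq\supp B$, since $\x^*A\x>0$ and $\log B=-\infty$ on $\ker B$.

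The main obstacle is the precise meaning of the left-hand side $\Delta(A\|B)$ when $B$ is singular. I would handle it by interpreting all functions of $B$ on $\range B$. The key structural step is then the splitting $A=A_1\oplus0$, which depends on the implication $\x^*A\x=0\Rightarrow A\x=\0$ for $A\ge 0$.
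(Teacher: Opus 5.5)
Your proposal is correct and follows the paper's own proof. You split $\C^n=\range B\oplus\ker B$, use $A\le\tau B$ to get $A\ker B=\{\0\}$, reduce to the already established case $A_1\ge 0$, $B_1>0$ with $\Delta(A\|B)$ defined by \eqref{zfor}, and invoke Lemma \ref{conv} both for convergence and for the divergent case. Your extra details, namely positivity of $\Delta(A\|B)$ from the positive semidefinite integrand and the trace argument showing both sides of \eqref{Fr} are infinite, fill in steps the paper leaves implicit.
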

\begin{proof}  Lemma \ref{conv} yields that it is enough to consider the case where $B$ is singular and  $A\le \tau B$ for some $\tau\ge 1$.  Let $\V\subset \C^n$ be the kernel of $B$.  Observe that $A\V=\{\0\}$.  Thus, we can decompose $A=A_1\oplus 0, B=B_1\oplus 0$, where $A_1,B_1$ are the restrictions of $A$ and $B$ to $B\C^n$.   Then $\Delta(A\|B)$ is defined by \eqref{zfor}.
\end{proof}
\section{An infinite dimensional case}\label{sec:op}
Let $\cH$ be an infinite dimensional separable Hilbert space with an inner product $\langle \cdot,\cdot\rangle$,  and the norm $\|\x\|=\sqrt{\langle \x,\x\rangle}$. Denote by $\cB\supset\cS\supset \cS_+\supset \cS_{++}$ the sets of bounded, self-adjoint, nonnegative definite,  and positive definite operators on $\cH$.   For $T\in \cB$ let  $\|T\|=\sup_{\x,\|\x\|=1} \|T\x\|$.   Assume that $T\in \cS$.  As in the finite dimenisional case denote by $\{T>0\}$ the projection on the positive spectrum of $T$.   See \cite{RS80}.
Then the spectral decomposition of $T$ yields the equality \eqref{defT+}.
For $T\in \cB$ let $|T|=\sqrt{T^*T}$.   Then 
\begin{equation}\label{def|T|}
\begin{aligned}
&T=T_+-T_-, \quad |T|=T_+ + T_-,\\
&T_+=\frac{1}{2}(|T|+T), \quad T_-=\frac{1}{2}(|T|-T),\\
&\|T_+\|=\sup_{\|x\|\le 1} \langle \x, T\x\rangle, \quad \|T_-\|=\sup_{\|x\|\le 1} -\langle \x, T\x\rangle,\quad \|T\|=\sup_{\x\in\cH, \|\x\|=1}|\langle \x, T\x\rangle|.
\end{aligned}
\end{equation}
Denote by $\cK_p\supset\cS_p\supset\cS_{p,+}\supset\cS_{p,++}$ the $p$-Schatten space of compact operators, the subspace  of self-adjoint, nonnegative definite,  and positive definite operators with the norm $\| T\|_p=\big(\tr |T|^p\big)^{1/p}, p\in[1,\infty]$.  

It is stated in \cite[Section 2.2]{Jen24} that \eqref{Fr} holds for $A,B\in\cS_{1,+}$ as follows.  Assume first that $\supp A\not\subseteq\supp B$.  Then as in Theorem  \ref{thm1} both sides of \eqref{thm1} are $\infty$.   Assume second that $\supp A\subseteq\supp B$.  As in Theorem \ref{thm1} we can restrict $A$ and $B$ to $\range B$.  Thus, it is enough to discuss the case where $B>0$.  
Thus, we can summarize \cite[Theorem 1]{Jen24} in this form:
\begin{theorem}\label{thm2}  Assume that  $A\in\cS_{1,+}, B\in\cS_{1,++}$.  
Let 
\begin{equation}\label{trbd1}
e_1:=\int_1^{\infty}\gamma^{-1}\|O_{\gamma}(A\|B)\|_1d\gamma +\int_1^{\infty}\gamma^{-2}\|O_{\gamma}(B\|A)\|_1d\gamma.
\end{equation}
Then the following dichotomy holds: Either $e_1<\infty$,
in which case both sides of \eqref{Fr} are finite and equal,  or $e_1=\infty$ and both sides of \eqref{Fr} are $\infty$.
\end{theorem}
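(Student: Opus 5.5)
The plan is to reduce Theorem \ref{thm2} to the finite dimensional identity \eqref{Fr}, in its equivalent form $D(A\|B)=\tr u$ with $u$ the right side of \eqref{fABeq}, by finite rank truncations, and to pass to the limit by monotone convergence on both sides. First I record that the integrand on the right of \eqref{Fr}, rewritten via \eqref{fABeq}, is $\gamma^{-1}\tr O_\gamma(A\|B)+\gamma^{-2}\tr O_\gamma(B\|A)$. Since $O_\gamma\ge 0$, we have $\tr O_\gamma=\|O_\gamma\|_1$, so the right side of \eqref{Fr} is exactly $e_1\in[0,\infty]$ by Tonelli. Hence the dichotomy reduces to the single identity $D(A\|B)=e_1$ in $[0,\infty]$, with $D(A\|B)$ interpreted as the (possibly infinite) sum of the nonnegative series $\sum_{i,j}|\langle \bu_i,\bv_j\rangle|^2\big(a_i\log a_i-a_i\log b_j-a_i+b_j\big)$, where $A=\sum a_i\bu_i\bu_i^*$ and $B=\sum b_j\bv_j\bv_j^*$ are spectral decompositions. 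This is the standard Lindblad definition; each term is nonnegative because $x\log(x/y)-x+y\ge 0$.

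Next I approximate. Let $P_k$ be the spectral projection of $B$ onto its $k$ largest eigenvalues, or better, increasing finite rank projections $Q_k\uparrow I$ adapted to both $A$ and $B$. Put $A_k=Q_kAQ_k$ and $B_k=Q_kBQ_k$, viewed on the finite dimensional range, where $B_k>0$. The finite dimensional formula (\eqref{Fr}, i.e.\ trace of \eqref{Frg} combined with \eqref{ADBeq}) gives $D(A_k\|B_k)=e_1^{(k)}$, the analogue of \eqref{trbd1} for $A_k,B_k$. The key inputs are then two limit statements. On the divergence side, lower semicontinuity of $D$ together with monotonicity under the compression (pinching) $X\mapsto Q_kXQ_k$ (Lindblad's theorem and data processing for the partial compression) gives $D(A_k\|B_k)\uparrow D(A\|B)$. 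On the integral side, $\tr (Q_kTQ_k)_+\le \tr T_+$ for self-adjoint trace class $T$ (by \eqref{def|T|}, $\tr T_+=\sup_{0\le P\le I}\tr PT$), and $\tr (Q_kTQ_k)_+\to\tr T_+$ because $Q_kTQ_k\to T$ in $\|\cdot\|_1$ and $T\mapsto \tr T_+=\frac12(\|T\|_1+\tr T)$ is $\|\cdot\|_1$-continuous. Applying this pointwise in $\gamma$ to $T=A-\gamma B$ and to $T=B-\gamma A$, the integrands converge pointwise and are dominated (or increase, if the $Q_k$ are nested), so Fatou gives $e_1\le\liminf e_1^{(k)}$. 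Monotone convergence, or dominated convergence when $e_1<\infty$, gives equality.

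Combining, $D(A\|B)=\lim D(A_k\|B_k)=\lim e_1^{(k)}=e_1$. If $e_1<\infty$ both sides of \eqref{Fr} are finite and equal; otherwise both are $\infty$. The main obstacle is the convergence $e_1^{(k)}\to e_1$ when $e_1=\infty$, or more generally without a dominating function. Pointwise we only have $\tr(Q_kTQ_k)_+\le \tr T_+$, which bounds $e_1^{(k)}\le e_1$ from above, while Fatou bounds it from below; these two together give the limit in all cases, so I would first verify carefully the sup characterization of $\tr T_+$ that yields the compression inequality. The second delicate point is the monotone convergence $D(A_k\|B_k)\to D(A\|B)$. If needed I would instead cite \cite[Theorem 1]{Jen24} for this step, since the theorem is presented as a summary of it.
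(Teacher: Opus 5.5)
Your argument is correct, but it takes a genuinely different route: the paper gives no independent proof of Theorem~\ref{thm2} and presents it only as a restatement of \cite[Theorem 1]{Jen24}, after the support reduction to $B>0$ described just before it.

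You prove it directly by finite-rank compression. Since $\tr O_\gamma=\|O_\gamma\|_1$, the substitutions behind \eqref{fABeq} show that the right side of \eqref{Fr} is exactly $e_1$. So everything reduces to $D(A\|B)=e_1$ in $[0,\infty]$. You then squeeze both sides against the same finite-dimensional quantities:
\begin{itemize}
\item on the divergence side, $D(Q_kAQ_k\|Q_kBQ_k)\le D(A\|B)$ together with lower semicontinuity;
\item on the integral side, $\tr(Q_kTQ_k)_+\le\tr T_+$ together with $\|\cdot\|_1$-continuity of $T\mapsto\tr T_+$ and Fatou.
\end{itemize}
Neither squeeze needs nesting or a dominating function, so the case $e_1=\infty$ is handled uniformly. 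The finite-dimensional identity is used only at trace level, where \eqref{ADBeq} removes the $D\log$ term. Consequently the operator-level convergence questions of Problem~\ref{pr1} never arise.

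Two details should be stated cleanly.
\begin{itemize}
\item $X\mapsto QXQ$ is a compression, not a pinching, and it is not trace preserving. To get monotonicity of the unnormalized $D$, apply data processing to the pinching $X\mapsto QXQ+(I-Q)X(I-Q)$, use additivity of $D$ over the direct sum, and discard the nonnegative complementary term.
\item The characterization $\tr T_+=\sup_{0\le P\le I}\tr PT$ is not literally \eqref{def|T|}, which concerns operator norms. It does follow in one line from $\tr PT\le\tr PT_+\le\tr T_+$, with equality at $P=\{T>0\}$.
\end{itemize}

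What each approach buys: the citation gives generality at no cost, since Jen\v{c}ov\'a works with normal functionals on arbitrary von Neumann algebras. Your route is self-contained modulo standard properties of Lindblad's relative entropy. It is essentially the trace-level version of the truncation scheme the paper uses for Theorem~\ref{thm3}, with the variational bound on $\tr(\cdot)_+$ replacing the Cauchy interlacing argument of Lemma~\ref{Pnl}. It also adapts to $B\in\cS_{1,+}$ via the finite-dimensional dichotomy of Theorem~\ref{thm1}, so the preliminary support reduction would not be needed.
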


We now give the following versions of Theorem \ref{thm1}:
\begin{theorem}\label{thm3}  Assume $\cH$ is infinite dimensional  separable Hilbert space,  and $A\in \cS_+,  B\in \cS_{++}$.
Fix an orthonormal basis $\bv_i, i\in\N$.  
Denote by $P_n$ the projection  on $\V_n=$span$(\bv_1,\ldots,\bv_n)$, and for $n\in\N$ let
\begin{equation}
\begin{aligned}
&A_n=P_nAP_n,\quad B_n=P_nBP_n,\\
&C_n|\V_n:=A_n(\log A_n -\log B_n)|\V_n, D_n=B_nD[B_n](A_n)|\V_n,  \quad C_n|\V_n^\perp= D_n|\V_n^\perp=0.
\end{aligned}
\end{equation}
\begin{enumerate}
\item Let 
\begin{equation}\label{defe}
e:=\int_1^{\infty}\gamma^{-1}\|O_{\gamma}(A\|B)\|d\gamma +\int_1^{\infty}\gamma^{-2}\|O_{\gamma}(B\|A)\| d\gamma, \quad p\in[1,\infty].
\end{equation}
Assume that $e<\infty$.   Then \eqref{Frg1} holds.   The operator divergence $\Delta(A\|B)$ is defined as follows.  
The sequences $\{B_n\}, \{C_n-D_n\}$ of finite dimensional operators  converge in the strong topology to operators $B\in \cS_{++},C-D\in \cS$ respectively, where 
\begin{equation}\label{defAlogAB}
\Delta(A\|B):=C-D+B. 
\end{equation}
If there exists $\tau\ge 1$ such that $A\le \tau B$ then $e<\infty$.  
\item Fix $p\in[1,\infty]$, and assume that $A\in\cS_{p,+}, B\in\cS_{p,++}$ Let
\begin{equation}\label{trbdp}
e_p:=\int_1^{\infty}\gamma^{-1}\|O_{\gamma}(A\|B)\|_pd\gamma +\int_1^{\infty}\gamma^{-2}\|O_{\gamma}(B\|A)\|_p d\gamma, \quad p\in[1,\infty].
\end{equation}
Assume that $e_p<\infty$.  Then \eqref{Frg1} holds.   The left hand side of \eqref{Frg1} is defined as follows.  
The sequences $ \{B_n\},\{C_n-D_n\}$ of finite dimensional operators  converge in the norm $\|\cdot\|_p$ to operators $B,C-D\in\cS_p$, respectively, where \eqref{defAlogAB} holds.
If there exists $\tau\ge 1$ such that $A\le \tau B$ then $e_p<\infty$ in $[1,\infty]$.  
\end{enumerate}
\end{theorem}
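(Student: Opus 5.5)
The plan is to reduce everything to the finite-dimensional identity \eqref{Frg1}, already proved for $A_n\ge 0$, $B_n>0$ on $\V_n$ (via \eqref{Frg} and \eqref{fABeq}, then extended to $A\ge0$ by continuity). We then pass to the limit $n\to\infty$ on both sides. Note first that $B_n=P_nBP_n$ restricted to $\V_n$ is positive definite, since $\langle \x,B\x\rangle>0$ for $\x\ne 0$ and $\V_n$ is finite dimensional. Hence
\[
C_n-D_n+B_n=\int_1^\infty\big(\gamma^{-1}O_\gamma(A_n\|B_n)+\gamma^{-2}O_\gamma(B_n\|A_n)\big)d\gamma .
\]
Also $B_n\to B$ strongly, and in $\|\cdot\|_p$ when $B\in\cS_p$, $p<\infty$. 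This is standard: $P_n\to I$ strongly, and for compact operators in $\cS_p$ one uses $\|P_nTP_n-T\|_p\to 0$. Therefore it suffices to show that the right-hand side converges to $\int_1^\infty(\gamma^{-1}O_\gamma(A\|B)+\gamma^{-2}O_\gamma(B\|A))d\gamma$. Then $C_n-D_n$ converges as well, and we define $\Delta(A\|B)=C-D+B$ by \eqref{defAlogAB}.

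\textbf{Pointwise convergence of the integrands.} For fixed $\gamma$, $T_n:=A_n-\gamma B_n=P_n(A-\gamma B)P_n\to T:=A-\gamma B$ strongly, with $\|T_n\|\le\|T\|$. The map $x\mapsto x_+$ is continuous on $[-\|T\|,\|T\|]$, so it is a uniform limit of polynomials there. Since products of uniformly bounded strongly convergent sequences converge strongly, $(T_n)_+\to T_+$ strongly. In the Schatten case I would use instead that $x\mapsto x_+$ is operator Lipschitz in $\|\cdot\|_p$ for $1<p<\infty$. For $p=1,\infty$ I would write $T_+=\frac12(|T|+T)$ and use the continuity of $T\mapsto |T|$ in $\|\cdot\|_p$ together with norm convergence $T_n\to T$, which holds for compact $T$. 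Thus $O_\gamma(A_n\|B_n)\to O_\gamma(A\|B)$, and likewise for $O_\gamma(B_n\|A_n)$.

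\textbf{Domination, the main obstacle.} Everything hinges on a $\gamma$-integrable bound, uniform in $n$, that lets us use dominated convergence (for Bochner integrals in $\cS_p$, or weakly via $\langle\x,\cdot\,\y\rangle$ in the strong case). For the second term, the argument of \eqref{BAg} gives $\|O_\gamma(B_n\|A_n)\|_p\le\|B_n\|_p\le\|B\|_p$, which is integrable against $\gamma^{-2}$. For the first term, the hypothesis $e<\infty$ (resp. $e_p<\infty$) concerns $A,B$ and not $A_n,B_n$, so a comparison is needed. One route is monotonicity under compression: $P_n(A-\gamma B)P_n\le P_n(A-\gamma B)_+P_n$, and hence $\|(P_nTP_n)_+\|_p\le\|P_nT_+P_n\|_p\le\|T_+\|_p$. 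The middle inequality holds because $P_nT_+P_n\ge P_nTP_n$ and $P_nT_+P_n\ge0$, and for $X\ge Y$, $X\ge0$ one has $\|Y_+\|_p\le\|X\|_p$ by the min–max principle on eigenvalues. This yields the integrable majorant $\gamma^{-1}\|O_\gamma(A\|B)\|_p$, whose integral is finite by the assumption on $e_p$ or $e$. With domination in hand, the right-hand sides converge (in the strong topology, resp. in $\|\cdot\|_p$), which proves convergence of $C_n-D_n$ and establishes \eqref{Frg1}.

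\textbf{Finiteness under $A\le\tau B$.} If $A\le\tau B$, then for $\gamma\ge\tau$ we have $A-\gamma B\le0$, so $O_\gamma(A\|B)=0$, exactly as in Lemma~\ref{conv}(2). For $1\le\gamma\le\tau$ the min–max bound above gives $\|O_\gamma(A\|B)\|_p\le\|A\|_p$. Together with the $\gamma^{-2}\|B\|_p$ bound on the second term, this gives
\[
e_p\le\|A\|_p\log\tau+\|B\|_p<\infty,
\]
and similarly $e<\infty$ with operator norms.
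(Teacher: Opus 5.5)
Your proposal is correct and follows the same route as the paper. You apply the finite-dimensional identity \eqref{Frg1} to the compressions $A_n=P_nAP_n$, $B_n=P_nBP_n$ on $\V_n$, dominate $\|O_\gamma(A_n\|B_n)\|_p$ by $\|O_\gamma(A\|B)\|_p$ and $\|O_\gamma(B_n\|A_n)\|_p$ by $\|B\|_p$ (the content of \eqref{p=in}, \eqref{pl1} and \eqref{BAg}), pass to the limit by dominated convergence, subtract $B_n\to B$, and handle $A\le\tau B$ via $O_\gamma(A\|B)=0$ for $\gamma\ge\tau$; your auxiliary justifications (Weierstrass approximation for strong convergence of $(P_nTP_n)_+$, $\cS_p$-continuity of $T\mapsto T_+$, and an $n$-uniform norm majorant instead of the paper's vector-wise $g(\gamma)$) are, if anything, tighter than the paper's.
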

\begin{proof}
\emph{(1)}
Kato's inequality \eqref{Kat1} yields that $O_\gamma(A\|B), O_{\gamma}(B\|A)$ are continuous in $\gamma\in\R$.     As $e_{\infty}<\infty$ it follows that $G(t)=\int_1^{t} \big(\gamma^{-1} O_{\gamma}(A\|B)+\gamma^{-2}O_{\gamma}(B\|A)\big)d\gamma$
converges to $G\in\cS_+$ as $t\to\infty$ in the operator norm $\|\cdot\|$.
Let $A_n=P_nAP_n\rH_{n,+}, B_n=P_nBP_n\in\rH_{n,++}$, and
\begin{equation}\label{Gnt}
G_n(t)=\int_1^{t} \big(\gamma^{-1} O_{\gamma}(A_n\|B_n)+\gamma^{-2}O_{\gamma}(B_n\|A_n)\big)d\gamma.   
\end{equation}
The inequality \eqref{p=in} yields
\begin{equation*}
\begin{aligned}
&\|O_\gamma(A_n\|B_n)\|\le \|O_\gamma(A\|B)\|, \quad\|O_\gamma(A_n\|B_n)\|\le \|O_\gamma(A\|B)\|\Rightarrow \\
&\int_1^{\infty} \big(\gamma^{-1} O_{\gamma}(A_n\|B_n)+\gamma^{-2}O_{\gamma}(B_n\|A_n)\big)d\gamma\le e.
\end{aligned}
\end{equation*}
Hence,  $G_n(t)=\int_1^{t} \big(\gamma^{-1} O_{\gamma}(A_n\|B_n)+\gamma^{-2}O_{\gamma}(B_n\|A_n)\big)d\gamma$ converges in the norm  $\|\cdot\|$ to $G_n=\int_1^{\infty} \big(\gamma^{-1} O_{\gamma}(A_n\|B_n)+\gamma^{-2}O_{\gamma}(B_n\|A_n)\big)d\gamma$.   

We claim that $G_n$ converges in the strong topology to $G_n$.  Fix $\y\in\cH, \|\y\|=1$.   Observe that 
\begin{equation*}
\begin{aligned}
&|(G-G_n)\y\|\le \int_1^\infty \big(\gamma^{-1} \|(O_\gamma(A\|B)-O_\gamma(A_n\|B_n))\y\|+\gamma^{-2}\|(O_\gamma(B\|A)-O_\gamma(B_n\|A_n))\y\|\big)d\gamma\le 2e\\
\end{aligned}
\end{equation*}
For $\gamma\in[1,\infty)$ set
\begin{equation*}
\begin{aligned}
&f_n(\gamma)=\gamma^{-1} \|(O_\gamma(A\|B)-O_\gamma(A_n\|B_n))\y\|+\gamma^{-2}\|(O_\gamma(B\|A)-O_\gamma(B_n\|A_n))\y\|, \quad n\in\N,\\
&g(\gamma)=2\big(\gamma^{-1} \|\big(O_{\gamma}(A\|B)\y\|+\gamma^{-2}\|O_{\gamma}(B\|A)\y\|\big).  
\end{aligned}
\end{equation*}
Clearly,  $f_n(\gamma)$, and $g(\gamma)$ are continuous funcrions, and $0\le f_n(\gamma)\le g(\gamma)$ for $\gamma\in[1,\infty]$ and $n\in\N$.
The equality \eqref{p=in} yields that $\lim_{n\to\infty}f_n(\gamma)=0$.   Lebesgue's dominated convergence theorem yields that $G_n,n\to\infty$ converges in the strong topology to $G$.   As $B\in\cS_{++}$ it follows that $B_n$ is positive definite on $\cH_n=P_n\cH$.  Hence,  part \emph{(1)} of Theorem 1 holds for $A_n|\cH_n$ and $B_n|\cH_n$.   As  $G_n,n\to\infty$ converges in the strong topology to $G$, we deduce that $C_n-D_n=B_n$ converges in the strong topology to $G$.
The equality \eqref{stp} yields that $A_n$ and $B_n$ converge in the strong topology to $A$ and $B$ respectively.  Hence,  $C_n-D_n$ converge in the strong topology to $C-D$

Assume now that $A\le \tau B$ for some $\tau\ge 1$.  Then 
$$e':=\int_1^{\infty}\gamma^{-1}\|O_{\gamma}(A\|B)\|d\gamma=\int_1^{\tau}\gamma^{-1}\|O_{\gamma}(A\|B)\|d\gamma<\infty.$$
As in the proof of part \emph{(1)} of Lemma \ref{conv} we deduce that $e'':=\int_1^{\infty}\gamma^{-2}O_{\gamma}(B\|A) d\gamma<\infty$.

\noindent
\emph{(2)}  Fix $p\in[1,\infty]$.   
Let  $L_0,L_1\in\cS_p$, and $L(\gamma):=L_0-\gamma L_1\in \cS_p$ for $\gamma\in(-\infty,\infty)$.  Then
$$\|L(\gamma)\|_p=\big(\|L_+(\gamma)\|_p^p+\|L_-(\gamma)\|_p^p\big)^{1/p}\le \|L_0\|_p+|\gamma|\|L_1\|_p.$$
Recall Rellich's theorem \cite[Theorem 3.9]{Kat80}.  The eigenvalues and the eigenvectors of the pencil $L(\gamma)$ are analytic on $\R$.  Hence each nonzero analytic eigenvalue of $L(\gamma)$ has a countable number of zeros that accumulate to $\pm\infty$.   Therefore $\|L_+(\gamma)\|_p$ is a continuous function on $\R$.  (See Theorem \ref{thmR} for the case where $L_0,L_1$ have finite dimensional range.)
Define
\begin{equation*}
\begin{aligned}
&f_{n,p}(\gamma)=\gamma^{-1} \|(O_\gamma(A\|B)-O_\gamma(A_n\|B_n))\|_p+\gamma^{-2}\|(O_\gamma(B\|A)-O_\gamma(B_n\|A_n))\|_p, \quad n\in\N,\\
&h_{n,p}(\gamma)=\big(\gamma^{-1} \|\big(O_{\gamma}(A_n\|B_n)\|_p+\gamma^{-2}\|O_{\gamma}(B_n\|A_n)\|\big),\\
&g_p(\gamma)=2\big(\gamma^{-1} \|\big(O_{\gamma}(A\|B)\|_p+\gamma^{-2}\|O_{\gamma}(B\|A)\|_p\big).  
\end{aligned}
\end{equation*}
Note that all the above functions are continuous on $\R$.  The equalities \eqref{pl1}
yield that the sequence $h_{n,p}(\gamma),n\in\N$ is an increasing sequence in $n\in\N$ for a fixed $\gamma$, which converges to $g_p(\gamma)/2$.  Furthermore,  $\lim_{n\to\infty}f_{n,p}(\gamma)=0$.  

Assume that $e_p<\infty$. Then $G_p(t)=\int_1^{t} \big(\gamma^{-1} O_{\gamma}(A\|B)+\gamma^{-2}O_{\gamma}(B\|A)\big)d\gamma$
converges to $G_p\in\cS_{p,+}$ as $t\to\infty$ in the  norm $\|\cdot\|_p$.
Let $G_{n,p}=\int_1^{\infty} \big(\gamma^{-1} O_{\gamma}(A\|B)+\gamma^{-2}O_{\gamma}(B\|A)\big)d\gamma\in \cS_{p,+}$.   Lebesgue's dominated convergence theorem yield that $G_{n,p},n\to\infty$ converges in $\|\cdot\|_p$ to $G_p$.    Hence, $C_n-D_n+B_n$ converge to $\Delta(A\|B)$.
Equalities \eqref{pl1} imply that $\lim_{n\to\infty} \|B-B_n\|_p=0$.  Hence,  $C_n-D_n$  
converges to $\Delta(A\|B)-B$.

Assume  that $A\le \tau B$.  Then $e_p<\infty$ as in the proof of part \emph{(1)}.
\end{proof}

\begin{problem}\label{pr1} 
\begin{enumerate}
Assume that $A,B\in\cS_+$ and $A^2\le \alpha^2 B^2$.  Then \eqref{DBi} holds.
Loewner's theorem \cite[Theorem 4.12]{Kat80} yields $A\le \alpha B$.
\item
Is it true that the sequence $D_n,n\in\N$ in Theorem \ref{thm3} converges to $BD[B](A)$ in the strong topology in case \emph{(1)},  and in norm in case  \emph{(2)}?
\item Assume that the assumption of part \emph{(2)} of Lemma \ref{tfo} holds. 
Is it true that the sequence $C_n,n\in\N$ in Theorem \ref{thm3} converges to $A(\log A-\log B)$ in the strong topology in case \emph{(1)},  and in norm in case  \emph{(2)}?

\end{enumerate}
\end{problem}

\section*{Acknowledgment}
The author thanks Gilad Gour for pointing out the references \cite{CGLL25,Fre23},  and Marco Tomamichel  for the reference \cite{HT24}.

\bibliographystyle{plain}

\appendix
\section{Auxiliary results}\label{sec:pac}
\subsection{A finite dimensional case}\label{subsec:fdc}
\begin{definition}\label{dhr}
A Hermitian pencils is defined:
\begin{equation}\label{hpe}
H(z)=A+zB, \quad A,B\in\rH_n.
\end{equation}
A pencil $H(z)$ is called nonsingular if $\det H(z)\not\equiv 0$,  and  $1$-indecomposable if $H(z)$ does not have a constant eigenvector: $H(z)\x=(a+bz)\x, \x\ne 0$.  $H(z)$ is called completely $1$-decomposable, if there exists a unitary $U\in\C^{n\times n}$ such that $H(z)=U\diag(a_1+zb_1),\ldots,a_n+zb_n)U^*$.  
\end{definition}

Recall that $H(z)$ is completely $1$-decomposable if and only if $AB=BA$.
\begin{lemma}\label{dlm}  Assume that $H(z)$ is of the form \eqref{hpe}.
\begin{enumerate}
\item The pencil $H(z)$ is either $1$-indecomposable, completely $1$-decomposable, or the there exists unitary $U\in\C^{n\times n}$ such 
\begin{equation}\label{Hdc1}
\begin{aligned}
&H(z)=U\big(H_1(z)\oplus H_2(z)\big)U^*, \\
&H_2(z)=\diag(a_1+zb_1,\ldots,a_k+zb_k)\big), k\in[n-2], \quad H_1(z))\in \C^{(n-k)\times(n-k)}[z],
\end{aligned}
\end{equation}
where $H_1(z)$ is 1-indecomposable.
\item Assume that $H(z)$ is 1-indecomposable.    Then either $H(z)$ is nonsingular,  or there exists an invertible $P\in\C^{n\times n}$ such that 
\begin{equation}\label{Hdc2}
\begin{aligned}
&H(z)=P\big(\oplus_{j=1}^{k+1} H_j(z)\big)P^*,\\
&H_j(z)=\begin{bmatrix}0_{m_j\times m_j}&L_{m_j}(z)\\L_{m_j}^\top(z)&0_{(m_j+1)\times (m_j+1)}\end{bmatrix}, L_{m_j}=[I_{m_j}\0]+z[\0 \,I_{m_j}]\in\C^{m_j\times(m_j+1)}[z], j\in[k],
\end{aligned}
\end{equation}
where $H_{k+1}=\emptyset$ or $H_{k+1}$ is a nonsingular Hermitian pencil.
In particular, $\det(\lambda I_n-H(z))=\lambda^k p(\lambda,z)$, and $p(0,z)\not \equiv 0$.  Hence,  $H(z)$ has exactly $k$ $0$-eigenvalues, and $n-k$ eigenvalues that vanish
at most $n-k$ points in $\C$.
\end{enumerate}
\end{lemma}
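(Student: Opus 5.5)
Part (1) is the classical canonical splitting of a Hermitian pencil along its common eigenvectors, and part (2) is the Kronecker canonical form for singular Hermitian pencils. I will treat them in that order.

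\emph{Part (1).} Let $\V\subseteq\C^n$ be the span of all constant eigenvectors of $H(z)$, i.e.\ of all common eigenvectors of $A$ and $B$. Since $A=H(0)$ and $B=H(1)-H(0)$, such an $\x$ satisfies $A\x=a\x$ and $B\x=b\x$ with real $a,b$.

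The key claim is that $\V$ is spanned by an orthonormal set of common eigenvectors. First, $\V$ is invariant under $A$ and $B$, since each spanning vector is. Then the restrictions $A|\V$ and $B|\V$ are Hermitian. They also commute on $\V$, because they commute on each spanning vector. Hence they are simultaneously unitarily diagonalizable on $\V$.

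Second, $\V^\perp$ is invariant under $A$ and $B$ because both are Hermitian. So in an orthonormal basis adapted to $\V^\perp\oplus\V$ we get $H=H_1\oplus H_2$, where $H_2$ is diagonal with entries $a_i+zb_i$.

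Finally, $H_1$ on $\V^\perp$ has no constant eigenvector: such a vector would be a common eigenvector of $A$ and $B$, hence lie in $\V$. Three cases remain. If $\V=\{\0\}$, $H$ is $1$-indecomposable. If $\V=\C^n$, $H$ is completely $1$-decomposable. Otherwise $1\le k\le n-1$. The value $k=n-1$ cannot occur, because then $H_1$ would be $1\times1$ and therefore trivially constant-eigenvector. This gives $k\in[n-2]$ as stated.

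\emph{Part (2).} Assume $H$ is $1$-indecomposable and singular, i.e.\ $\det H(z)\equiv0$. I would invoke the Kronecker canonical form for the pencil $A+zB$. The pencil has right minimal indices $\epsilon_j$ and left minimal indices $\eta_j$. Hermitian symmetry ($H^*=H$ for real $z$) forces the left and right minimal indices to coincide.

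The Hermitian (congruence) canonical form, as in Thompson's classification of Hermitian pencils under congruence, then pairs each $L_{m}$ block with its transpose. This gives the $(2m_j+1)$-square blocks displayed in \eqref{Hdc2}, with $P$ invertible and the remaining regular part $H_{k+1}$ nonsingular. The main obstacle is justifying this congruence form. I would quote it rather than rederive it, noting that real $z$ is what makes $P$ act by congruence.

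For the eigenvalue statement, count the kernel. Each block $H_j(z)$ has size $2m_j+1$ and generic rank $2m_j$, so it contributes exactly one generic null vector. The remaining part $H_{k+1}$ is nonsingular. Hence $\dim\ker H(z)=k$ for all $z$ outside the finitely many zeros of $\det H_{k+1}(z)$.

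Because $H(z)$ is Hermitian for real $z$, algebraic and geometric multiplicity of the eigenvalue $0$ agree there. So $\lambda^k$ divides $\det(\lambda I_n-H(z))$ identically in $z$, by the identity theorem for polynomials, and we can write $\det(\lambda I_n-H(z))=\lambda^kp(\lambda,z)$. Moreover $p(0,z)\not\equiv0$, since otherwise the generic multiplicity of $0$ would exceed $k$.

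The remaining $n-k$ eigenvalues vanish only at roots of $p(0,z)$. I expect $\deg_z p(0,z)\le n-k$, which I would verify by computing the degree of $\det(\lambda I_n-H(z))$ in $z$. This degree bound on $p(0,z)$ is the only delicate counting step.
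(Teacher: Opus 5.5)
Your proposal is correct and follows the paper's route. Part (1) splits off the span of the common eigenvectors of $A$ and $B$ (the paper just calls this straightforward). Part (2) quotes the Thompson / Lancaster--Rodman congruence form, counts one kernel vector per singular block, and passes to $\det(\lambda I_n-H(z))$ using that, for real $z$, the multiplicity of $0$ as an eigenvalue of the Hermitian $H(z)$ equals $\dim\ker H(z)$, which congruence preserves; you make that step more explicit than the paper does.

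The degree bound you left open closes as in the paper. Each entry of $\lambda I_n-H(z)$ is affine in $(\lambda,z)$, so $\lambda^k p(\lambda,z)$ has joint degree at most $n$. Hence $p$ has joint degree at most $n-k$, and $\deg p(0,z)\le n-k$. Bounding the degree in $z$ alone would only give $n$.
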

\begin{proof} \emph{(1)} is straightforward.

\noindent
\emph{(2)} Assume that $H(z)$ is $1$-indecomposable singular Hermitian pencil.  Then a variation of the Kronecker canonical form for general pencils gives decomposition 
\eqref{Hdc2} \cite[Theorem 1]{Tho76} or \cite[Theorem 6.1]{LR05}.
Next observe that for $H_j(z)$ has exactly on eigenvector corresponding $0$ eigenvalue.  As $H_j(z)$ is a real symmetric matrix it for a real $z$ it follows that $0$-eigenvalue is of multiplicity one of $H_j(z)$.  Hence, $0$ is an eigenvalue of multiplicity $k$ in $\oplus_{j=1}^{k+1} H_j(z)$.  Therefore,  $\det(\lambda I_n-H(z))=\lambda^k p(\lambda,z)$ where the joint degree of $p(\lambda,z)$ is $n-k$.  Furhtermore $p(0,z)$ is a nonzero polynomial of degree at most $n-k$.  (if $p(0,z)$ vanish identically, we will deduce that $0$ is an eigenvalue of multiplicity at $k+1$.)
Hence the product of the nonvanishing $n-k$ eigenvalues of $H(z)$ vanish at the zeros of $p(0,z)=0$, which is a set of cardinality at most $n-k$.
\end{proof}
\begin{example}\label{exm} Let  $H(z)=\begin{bmatrix}0&1&z\\1&0&0\\z&0&0\end{bmatrix}$.  Then
\begin{equation}\label{hex}
\begin{aligned}
&\det(\lambda I_3 -H(z))=\lambda^3-(1+z^2)\lambda,\\
&\lambda_1(z)=\sqrt{1+z^2}, \bu_1=\frac{1}{\sqrt{2(z^2+1)}}(\sqrt{z^2+1},1,z)^\top,\\
&\lambda_2(z)=0,\bu_2(z)=\frac{1}{\sqrt{z^2+1}}(0,-z,1)^\top, z),\\
&\lambda_3(z)=-\sqrt{1+z^2}, \bu_3=\frac{1}{\sqrt{2(z^2+1)}}(-\sqrt{z^2+1},1,z)^\top.
\end{aligned}
\end{equation}
The points where the two nonzero eigenvalues are zero are $\pm \sqrt{-1}$.
Let $\Omega$ be a simply connected domain in $\C$ containing $\R$ and does not contain $\pm \sqrt{-1}$.  Then the eigenvalues and eignevectors are analytic in $\Omega$.
The eigenvectors are orthonormal on $\R$.  Furthermore, 
\begin{equation*}
\begin{aligned}
&\{H(x)>0\}=\bu_1(x)\bu_1(x)^\top,  H(x)_+=\lambda_1(x)\bu_1(x)\bu_1(x)^\top, \\
&\{-H(x)>0\}=\bu_3(x)\bu_3(x)^\top,  H(x)_1=\lambda_3(x)\bu_3(x)\bu_3(x)^\top.
\end{aligned}
\end{equation*}
The eigenvalues and eigenvectors, except $\lambda_2(z)$,  are two multivalued on $\C\setminus\{\pm \sqrt{-1}\}$.
\end{example}

In what follows we need a following variation of Rellich's theorem \cite[Theorem 4.18.2]{Fribk} :
\begin{theorem}\label{thmR}
Let $H(z)$ be a Hermitian pencil given by \eqref{hpe}.
There exists a simply connected domain $\Omega\subset \C$ such that $\R\subset \Omega, \bar \Omega=\Omega$, and the following conditions hold:
The polynomial $\det (\lambda I_n-H(z))$  splits to $\prod_{j=1}^n (\lambda-\alpha_j(z))$, where each $\alpha_j(z)$ is analytic in $\Omega$ and satisfies $\alpha_j(\bar z)=\overline{\alpha_j(z)}$ for $z\in\Omega$ and $j\in[n]$.  If $\alpha_j(z)$ is not identically zero then each has a finite number of zeros in $\Omega$.
To each eigenvalue $\alpha_j(z)$ corresponds an analytic eigenvector $\bu_j(z)$: $A(z)\bu_j(z)=\alpha_j(z)\bu_j(z)$.    Furthermore $\langle u_i(\bar z), u_j(z)\rangle =\delta_{ij}$ for $i,j\in[n]$ for $z\in\Omega$.  
\item  There exists a finite number of point $Z=\{\zeta_1,\ldots,\zeta_m\}\subset\C\setminus\R$ such that each $\alpha_j(z)$ and $\bu_j(z)$ is a finite multi-valued function on $\C\setminus Z$.    
\end{theorem}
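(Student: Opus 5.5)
The plan is to reduce everything to the algebraic function defined by the characteristic polynomial $p(\lambda,z):=\det(\lambda I_n-H(z))\in\C[\lambda,z]$, which is monic of degree $n$ in $\lambda$. First I factor $p=\prod_{l} q_l^{r_l}$ into distinct irreducible factors in $\C[z][\lambda]$, each monic in $\lambda$. For each $q_l$ of $\lambda$-degree $\ge 2$, its discriminant with respect to $\lambda$ is a nonzero polynomial in $z$. I also need the discriminant of the squarefree part $\prod_l q_l$, which accounts for points where roots of different factors meet. Let $Z_0$ be the finite set of zeros of this last discriminant. Off $Z_0$ the roots of $p$ are locally analytic and finitely multi-valued; this is the standard theory of algebraic functions, and it gives the last claim of the theorem once I show that the genuine branch points avoid $\R$.

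To see that real points are never branch points I use Rellich's theorem on $\R$ (as in \cite[Theorem 4.18.2]{Fribk} or \cite{Kat80}). For real $x$ the matrix $H(x)$ is Hermitian, so there are real-analytic functions $\alpha_j(x)$ and orthonormal real-analytic vectors $\bu_j(x)$ on $\R$ with $H(x)\bu_j(x)=\alpha_j(x)\bu_j(x)$. Hence at a real point of $Z_0$ the Puiseux expansions of the roots contain no fractional powers, and the only genuine branch points form a finite set $Z\subset\C\setminus\R$. Since $\overline{p(\bar\lambda,\bar z)}=\det(\lambda I_n-A^\top-zB^\top)=p(\lambda,z)$, the set $Z$ is invariant under conjugation.

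I then define $\Omega$ as $\C$ minus the vertical rays $\{\zeta+is:s\ge0\}$ for $\zeta\in Z$ with $\Im\zeta>0$, and $\{\zeta-is:s\ge0\}$ for $\zeta\in Z$ with $\Im\zeta<0$. This domain is simply connected, contains $\R$, and satisfies $\bar\Omega=\Omega$. By the monodromy theorem the real-analytic Rellich branches $\alpha_j$ continue analytically to all of $\Omega$, and $\prod_j(\lambda-\alpha_j(z))=p(\lambda,z)$ on $\Omega$ by the identity theorem. For the reflection property, $z\mapsto\overline{\alpha_j(\bar z)}$ is analytic on $\Omega$ and coincides with $\alpha_j$ on $\R$, because $\alpha_j$ is real there; hence the two functions agree on $\Omega$.

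For the finiteness of zeros, each $\alpha_j$ is a root of some irreducible $q_l$. If $q_l=\lambda$ then $\alpha_j\equiv0$. Otherwise $q_l(0,z)\not\equiv0$, and the zeros of $\alpha_j$ lie among the finitely many roots of $q_l(0,z)$.

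The step I expect to be the main obstacle is the eigenvectors. The naive approach fails: where two branches $\alpha_i\not\equiv\alpha_j$ cross on $\R$ (a point of $Z_0\cap\R$), the individual Riesz projections may a priori have poles. I would first try the following. Group the eigenvalues by identical branches, and near such a crossing use the total Riesz projection for the cluster, which is analytic. Inside the cluster, invoke Rellich's Hermitian argument: Kato's reduction process shows the singularities of the individual eigenprojections are removable, because they are bounded (orthogonal projections) on $\R$. This makes each eigenprojection $P_j(z)$ analytic on $\Omega$, again by monodromy off $Z$. I then take $\bu_j(z)$ to be the analytic continuation of Rellich's $\bu_j$. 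It remains an eigenvector because $(H(z)-\alpha_j(z))\bu_j(z)$ is analytic and vanishes on $\R$.

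Finally, $z\mapsto\langle \bu_i(\bar z),\bu_j(z)\rangle$ is holomorphic, since conjugation in the first slot cancels the conjugation $\bar z$. It equals $\delta_{ij}$ on $\R$, so by the identity theorem it equals $\delta_{ij}$ on all of $\Omega$.
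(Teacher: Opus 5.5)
\textbf{Gap: the eigenvector part fails on your $\Omega$.} The eigenvalue part is sound: Rellich on $\R$ rules out real branch points, and the slit domain, monodromy and reflection work. Your finite-zeros argument via the irreducible factors $q_l$ is also correct, and more elementary than the paper's route through the Kronecker form in Lemma \ref{dlm}; for everything else the paper only cites \cite[Theorem 4.18.12]{Fribk}.

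You cut $\Omega$ only along rays from the branch points $Z$ of the eigenvalues, and then claim the eigenprojections are analytic on $\Omega$ ``by monodromy off $Z$''. That fails at non-real points of $Z_0\setminus Z$, where two distinct branches meet holomorphically. There $H(z)$ is not Hermitian and can have a Jordan block, so the eigenprojections have poles. The Kato--Rellich removability argument rests on the boundedness of orthogonal projections, so it works only at real points.

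Here is a counterexample. For $c>0$ let $H(z)=\begin{bmatrix}c&1&z\\1&0&0\\z&0&0\end{bmatrix}$.
\begin{itemize}
\item $\det(\lambda I_3-H(z))=\lambda(\lambda^2-c\lambda-1-z^2)$, whose only branch points are $\pm i\sqrt{1+c^2/4}$. So your $\Omega$ contains $\pm i$.
\item At $z=i$ the branches $0$ and $\frac12\big(c-\sqrt{c^2+4+4z^2}\big)$ meet without branching, yet $\ker H(i)$ is one-dimensional, so $H(i)$ has a Jordan block.
\item Near $i$, every analytic eigenvector for the branch $\alpha\equiv 0$ has the form $\varphi(z)(0,-z,1)^\top$. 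Hence $\langle \bu(\bar z),\bu(z)\rangle=\overline{\varphi(\bar z)}\,\varphi(z)(1+z^2)$, which vanishes at $z=i$.
\end{itemize}
So no choice of eigenvectors satisfies the theorem on your $\Omega$. The same example shows the set $Z$ in the last claim must also contain $\pm i$.

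\textbf{Two repairs are needed.}
\begin{enumerate}
\item Cut rays from every non-real point of $Z_0$. This set is conjugation-invariant because $p$ has real coefficients. Then distinct branches never meet in $\Omega\setminus\R$, and the Riesz projections $P_h$ of the clusters of identical branches are holomorphic there. Also $(H(z)-\alpha_h(z))P_h(z)$ vanishes on $\R$, hence on $\Omega$, and your removability argument covers the real crossings.
\item Do not simply continue ``Rellich's $\bu_j$''. A real-analytic orthonormal eigenframe on $\R$ is fixed only up to a real-analytic unitary change inside each eigenspace, and such changes need not extend. For example, for $H(z)=\diag(z,-z)$ the admissible choice $e^{i/(1+x^2)}\mathbf{e}_1$ has essential singularities at $\pm i$, although here $Z=\emptyset$. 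Construct the frame instead, for instance with Kato's transformation function $U'=\big(\sum_h P_h'P_h\big)U$, $U(x_0)=I_n$. It is holomorphic on the simply connected $\Omega$, unitary on $\R$, and satisfies $P_h(z)U(z)=U(z)P_h(x_0)$. With $\bu_j(z)=U(z)\bu_j(x_0)$, your identity-theorem argument then gives $\langle\bu_i(\bar z),\bu_j(z)\rangle=\delta_{ij}$.
\end{enumerate}
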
  
\begin{proof}
All the claims of the theorem are stated and proved in \cite[Theorem 4.18.12]{Fribk}, except the statement that each $\alpha_j(z)$ that is not identically zero has a finite number of zeros.  This statement follows from Lemma \ref{dlm}.
\end{proof}
\begin{corollary}\label{corR} Let $H(z)$ $n\times n$ nonzero Hermitian pencil.
Assume that $Z=\{\zeta_1,\ldots,\zeta_m\},\subset \R,0\le  m\le n$ be all the real points where the nonzero eigenvalues of $H(x)$ are zero.
\begin{enumerate}
\item The projections $\{H(x)>0\}, \{-H(x)>0\}$ and $H(x)_+, H(x)_-$ are analytic on the intervals $\R\setminus Z$.
\item The matrices  $H_+(x), H_-(x)$ are continuous on $\R$.
\end{enumerate}
\end{corollary}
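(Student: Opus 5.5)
We prove Corollary \ref{corR} directly from Theorem \ref{thmR}. That theorem gives, on a conjugation-symmetric simply connected domain $\Omega\supset\R$, analytic eigenvalue functions $\alpha_1(z),\ldots,\alpha_n(z)$ and analytic eigenvectors $\bu_1(z),\ldots,\bu_n(z)$. These are orthonormal for real $z=x$, since $\langle \bu_i(\bar z),\bu_j(z)\rangle=\delta_{ij}$ and $\bar x=x$. Hence for $x\in\R$ we have the spectral decomposition
$$H(x)=\sum_{j=1}^n \alpha_j(x)\bu_j(x)\bu_j(x)^*,$$
with each rank-one projection $\bu_j(x)\bu_j(x)^*$ real-analytic in $x$. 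The analyticity of $x\mapsto \bu_j(x)\bu_j(x)^*$ on $\R$ follows because it is the restriction of $z\mapsto \bu_j(z)\bu_j(\bar z)^*$, and $\bu_j(\bar z)^*$ is analytic in $z$.

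\emph{Partition of the indices.} Split $[n]$ into $J_0=\{j:\alpha_j\equiv 0\}$ and its complement. By Theorem \ref{thmR} (ultimately Lemma \ref{dlm}), each $\alpha_j$ with $j\notin J_0$ has only finitely many zeros in $\Omega$, hence finitely many real zeros. Their union is the set $Z$ of the statement. The bound $|Z|\le n$ can be taken from Lemma \ref{dlm}: the nonzero eigenvalues vanish only where $p(0,x)=0$, and $p(0,\cdot)$ is a nonzero polynomial of degree at most $n-k$. After the unitary reduction \eqref{Hdc1}, the $1$-decomposable part contributes linear eigenvalues $a_i+xb_i$, each with at most one zero unless identically zero.

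\emph{Part (1).} On a connected component $I$ of $\R\setminus Z$, every $\alpha_j$ with $j\notin J_0$ is real, continuous and nonvanishing, so it has constant sign on $I$. Let $J_\pm(I)$ be the indices with $\pm\alpha_j>0$ on $I$. Then on $I$
$$\{H(x)>0\}=\sum_{j\in J_+(I)}\bu_j\bu_j^*,\qquad H(x)_+=\sum_{j\in J_+(I)}\alpha_j\bu_j\bu_j^*,$$
and similarly for $\{-H(x)>0\}$ and $H(x)_-$ using $J_-(I)$ and $-\alpha_j$. Each of these is a finite sum of analytic functions, so it is analytic on $I$.

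\emph{Part (2).} For all $x\in\R$,
$$H(x)_+=\sum_{j}\max(\alpha_j(x),0)\,\bu_j(x)\bu_j(x)^*.$$
This holds for every real $x$, including points of $Z$, because it is just the spectral formula: the eigenvalues are $\alpha_j(x)$ with the orthonormal eigenbasis $\bu_j(x)$, and the terms with $\alpha_j(x)=0$ contribute nothing. Since $\max(\cdot,0)$ is continuous and $\alpha_j,\bu_j$ are continuous on $\R$, $H(x)_+$ is continuous on $\R$. Then $H(x)_-=H(x)_+-H(x)$ is continuous as well.

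\emph{Main obstacle.} The delicate point is that the analytic labeling $\alpha_j(x)$ is not the ordered labeling $\lambda_j(H(x))$: the branches may cross. We therefore must use the analytic branches, not the ordered eigenvalues, to get analyticity of the projections through crossings. Crossings between two positive branches do not matter, since only the sum over $J_+(I)$ appears. The projections $\{H(x)>0\}$ genuinely jump at $Z$, and that is why part (1) is stated only on $\R\setminus Z$ while part (2) asserts only continuity on all of $\R$.
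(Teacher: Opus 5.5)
Your proof is correct, and it is the derivation the paper intends: the corollary is stated without proof as an immediate consequence of Theorem \ref{thmR}, and you spell that consequence out. The ingredients are the analytic orthonormal eigen-decomposition of $H(x)$ on $\R$, the constant signs of the non-identically-zero branches on each component of $\R\setminus Z$, and the formula $H(x)_+=\sum_j\max(\alpha_j(x),0)\,\bu_j(x)\bu_j(x)^*$ for continuity. One small streamlining: the bound $m\le n$ follows directly because $p(0,z)$ is the coefficient of $\lambda^k$ in $\det(\lambda I_n-H(z))$, so it is a nonzero polynomial of degree at most $n-k$; you do not need the case split via \eqref{Hdc1}.
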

\subsection{An infinite dimensional case}\label{subsec:fdc}
Assume that $\cH$ is a separable infinite dimensional Hilbert space.   Denote by $\cS'_{++}$ the set positive operaotrs with a bounded inverse.  That is,  $T\in \cS'_{++}\iff \langle T\x,\x\rangle \ge \varepsilon(T)\|\x\|^2$ for some $\varepsilon(T)>0$.

Let $\{T_n,n\in \N\}\subset \cB$.  On $\cB$ we have three topologies:
\begin{enumerate}
\item Norm topology: $\lim T_n\to T\iff \lim_{n\to\infty} \|T_n-T\|=0$.
\item Strong topology: $\lim T_n\stackrel{strong}{\to} T\iff \lim_{n\to\infty} \|T_n\x -T\x\|=0$ for each $\x\in\cH$.
\item Weak topology: $\lim T_n\stackrel{weak}{\to} T\iff \lim_{n\to\infty} \langle \y,T_n\x\rangle=\langle \y,T\x\rangle$ for  each $\x,\y\in\cH$.
\end{enumerate}
Recall that if the sequences $\langle \y,T_n\x\rangle$ ($\{T_n\x\}$) converge for  each $\x,\y\in\cH$ ($\x\in\cH$) then $\sup_{n\in\N}\|T_n\|<\infty$.   Furthermore, there exists 
$T\in \cB$ such that $\lim T_n\stackrel{weak}{\to} T$ ($\lim T_n\stackrel{strong}{\to} T$) \cite[Sec. VI.1]{RS80}.

Let $T\in \cS$.  One associates with $T$ a family of projective-valued measures on the interval $[-\|T\|,\|T\|]$, see \cite[VIII.3,Proposition]{RS80}.  Then, for any bounded Borel function  $f$on $[-\|T\|,\|T\|]$ one can define $f(T)=\int_{-\|T\|}^{\|T\|} f(\lambda)dP_{\lambda}$ such that $\langle \x,f(T)\x\rangle=\int_{-\|T\|}^{\|T\|} f(\lambda)d\langle \x,P_\lambda \x\rangle$.  In particular
\begin{equation}\label{int1}
\begin{aligned}
&T=\int_{-\|T\|}^{\|T\|}\lambda dP_{\lambda},\quad
T_+=\int_0^{\|T\|}\lambda dP_{\lambda}, \quad T_-=-\int^0_{-\|T\|}\lambda dP_{\lambda},\\
&\{T>0\}=\int_{(0,\|T\|]}  dP_{\lambda},  \quad \{-T>0\}\int_{[-\|T\|,0)} dP_{\lambda},\\
&\log T=\int_{\varepsilon(T)}^{\|T\|} \log \lambda dP_{\lambda}, \quad \langle \x,T\x\rangle\ge \varepsilon(T)\|\x\|^2, \x\in\cH,  \varepsilon(T)>0.
\end{aligned}
\end{equation}

However, one prefers integral formulas which do not involve projections.
We start with the following well known integral formula of $\log b, b>0$:
$\log b=\int_0^{\infty}\big((1+x)^{-1}-(b+x)^{-1}\big)dx$ which reduces  to: 
\begin{equation}\label{logB}
\log B=\int_0^\infty \big((Id+x \1)^{-1}-(B+x \1)^{-1})dx, \quad B\in\cS'_{++}.
\end{equation}
(Here $\1$ is the identity operator.)
Observe that the second integral converge as $\|(B+x \1)^{-1}\|\le (\varepsilon(B)+x)^{-1}$ for $B\in \cS'_{++}(\cB)$ and $x\ge -\varepsilon(B)/2$.  Hence,   
\begin{equation*}
\begin{aligned}
&\frac{1}{t}\big(\log (B+tA) -\log B\big)=\frac{1}{t}\int_0^{\infty} \big((B+x \1)^{-1}-(B+tA+x \1)^{-1})dx=\\
&\frac{1}{t}\int_0^{\infty}(B+x \1)^{-1}\big((B+tA+x \1)-(B+x Id)\big)(B+tA+x \1)^{-1}dx=\\
&\int_0^\infty(B+x \1)^{-1}A(B+tA+x \1)^{-1}dx,\quad A\in \cS,  |t|\|A\|\le \varepsilon(B)/2.  
\end{aligned}
\end{equation*}
Thus, we deduce
\begin{equation}\label{int2}
D[B](A)=\lim_{t\to 0} \frac{1}{t}\big(\log (B+tA) -\log B\big)=\int_0^\infty (B+x\1)^{-1}A(B+x\1)^{-1}dx, A\in \cS,  B\in\cS'_{++}.
\end{equation}

Next recall the formula for $\sqrt{a}, a\ge 0$: $\sqrt{a}=\frac{2}{\pi}\int_0^\infty \frac{a}{a+x^2} dx$.   we deduce
\begin{equation}\label{int3}
\begin{aligned}
&|A|=\sqrt{A^2}=\frac{2}{\pi}\int_0^{\infty}{A^2}(A^2+x^2 \1)^{-1}dx=\frac{2}{\pi}\int_0^{\infty}(A^2+x^2 \1)^{-1}A^2dx,\\
&A_+=\frac{1}{2}\big(A+\frac{2}{\pi}\int_0^{\infty}{A^2}(A^2+x^2 \1)^{-1}dx\big)\\
&A_-=\frac{1}{2}\big(-A+\frac{2}{\pi}\int_0^{\infty}{A^2}(A^2+x^2 \1)^{-1}dx\big).
\end{aligned}
\end{equation}
Next observe that since $\|A^2(A^2+x^2 \1)^{-1}\|\le \frac{\|A^2\|}{\|A^2\|+x^2}$ the above integral converges.

It was shown by Kato \cite{Kat73}, basically using the first equality in \eqref{int3}:
\begin{equation}\label{Kat}
\||A|-|B|\|\le \frac{2}{\pi}\|A-B\|\big(2+\log \frac{\|A\|+\|B\|}{\|A-B\|}\big),  \quad A,B\in\cB.
\end{equation}
  For $A,B\in\rS_2$
Araki \cite[Lemma 5.2]{Ara71} showed
\begin{equation}\label{Ara}
\| |A|-|B|\|_2\le \|A-B\|_2.
\end{equation} 
In view of \eqref{def|T|} we deduce:
\begin{equation}\label{Kat1}
\max(\|A_+-B_+\|,\|A_--B_-\|) \le \frac{1}{\pi}\|A-B\|\big(\frac{\pi+4}{2}+\log\frac{\|A\|+\|B\|}{\|A-B\|}\big), \quad A,B\in \cB.
\end{equation}
\subsection{Approximation lemmas}\label{subsec:apr}
\begin{lemma}\label{Pnl} Let $\bv_n,n\in\N$ be an orthonormal basis in $\cH$.  Denote $\V_n=$span$(\bv_1,\ldots,\bv_n)$,  and $P_n:\cH\to \V_n$ the orthogonal projection.
Assume that $T\in\cS$.  Denote 
$$T_n=P_nTP_n, \quad T_{n,+}=(T_n)_+,\quad T_{n,-}=(T_n)_-\in\N.$$
\begin{enumerate}
\item Assume that  $T\in\cS_{\infty}$.
\begin{equation}\label{p=in}
\begin{aligned}
&\|T_{n,+}\|=\max_{\x\in\V_n,\|\x\|\le 1} \langle \x,T\x\rangle\le \|T_{n+1,+}\|\le \|T_+\|, \quad n\in\N,\\
&\lim_{n\to\infty} \|T-T_n\|=0,  \lim _{n\to\infty} \|T_+-T_{n,+}\| =0,  \quad \lim _{n\to\infty} \|T_--T_{n,-}\| =0.
\end{aligned}
\end{equation}
\item Assume that $p\in[1,\infty)$ and $T\in \cS_p$.  Then
\begin{equation}\label{pl1}
\begin{aligned}
&\|T_{n,+}\|_p\le \|T_{n+1,+}\|_p\le \|T_+\|_p,   \quad \|T_{n,-}\|_p\le \|T_{n+1,-}\|_p\le \|T_-\|_p,\quad n\in\N,\\
&\lim_{n\to\infty} \|T-T_n\|_p=0,  \lim _{n\to\infty} \|T_+-T_{n,+}\|_p =0,  \quad \lim _{n\to\infty} \|T_--T_{n,-}\|_p =0.
\end{aligned}
\end{equation}
\item Assume that $T\in \cS$.  Then the following sequences converge in the strong topology
\begin{equation}\label{stp}
\lim_{n\to\infty} T_n= T, \quad \lim_{n\to \infty} T_{n,+}=T_+, \quad \lim_{n\to\infty} T_{n,-}=T_-.
\end{equation}
\end{enumerate}
\end{lemma}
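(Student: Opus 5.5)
Throughout, $T_n=P_nTP_n$ is viewed as an operator on $\cH$ that vanishes on $\V_n^\perp$. The plan is to do part \emph{(3)} first, since it needs only bounded-operator facts, then part \emph{(1)}, and then to upgrade to $\|\cdot\|_p$ in part \emph{(2)}. I read $\cS_\infty$ in part \emph{(1)} as the compact self-adjoint operators.

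\emph{Part (3).} For $\x\in\cH$ we have $T_n\x-T\x=P_nT(P_n\x-\x)+(P_n-\1)T\x$. Both terms tend to $0$, because $P_n\to\1$ strongly and $\|P_nT\|\le\|T\|$. Hence $T_n\to T$ strongly.

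For the positive and negative parts, note that $\|T_n\|\le\|T\|$, so all spectra lie in $[-\|T\|,\|T\|]$.
\begin{itemize}
\item Strong convergence of a uniformly bounded sequence passes to products. Hence $q(T_n)\to q(T)$ strongly for every polynomial $q$.
\item Let $f(\lambda)=\max(\lambda,0)$ and approximate it uniformly on $[-\|T\|,\|T\|]$ by polynomials (Weierstrass). An $\varepsilon/3$ argument then gives $f(T_n)\to f(T)$ strongly, that is, $T_{n,+}\to T_+$ strongly.
\item Since $T_{n,-}=T_{n,+}-T_n$, also $T_{n,-}\to T_-$ strongly.
\end{itemize}

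\emph{Part (1).} The identity for $\|T_{n,+}\|$ follows from the variational characterization. The largest eigenvalue of $T_n|\V_n$ is $\max_{\x\in\V_n,\|\x\|=1}\langle\x,T\x\rangle$, and $\|T_{n,+}\|$ is its positive part; this equals the displayed maximum because the maximum is over $\|\x\|\le 1$, so it is $\ge 0$. Monotonicity in $n$ comes from $\V_n\subset\V_{n+1}$. The bound by $\|T_+\|$ comes from taking the supremum over all of $\cH$ and using \eqref{def|T|}.

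For norm convergence, write
$$T-T_n=(\1-P_n)T+P_nT(\1-P_n).$$
Since $T$ is compact, $\|(\1-P_n)T\|\to 0$: approximate $T$ in norm by a finite-rank operator, on which strong convergence of $P_n$ is uniform. The second term is handled the same way, using $\|T(\1-P_n)\|=\|(\1-P_n)T\|$ because $T$ is self-adjoint. Thus $\|T-T_n\|\to 0$. Kato's inequality \eqref{Kat1} then gives $\|T_+-T_{n,+}\|\to 0$ and $\|T_--T_{n,-}\|\to 0$, because $\varepsilon\log(1/\varepsilon)\to 0$ as $\varepsilon\to 0$.

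\emph{Part (2).} Monotonicity uses Cauchy interlacing (min-max for compressions). The $k$-th largest positive eigenvalue of $T_n|\V_n$ is at most that of $T_{n+1}|\V_{n+1}$, which is at most that of $T$. Summing $p$-th powers gives the first chain for $T_+$; applying the same argument to $-T$ gives it for $T_-$.

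For $\|T-T_n\|_p\to 0$, use the same decomposition of $T-T_n$ as in part \emph{(1)}. Approximate $T$ in $\cS_p$ by finite-rank operators and use $\|(\1-P_n)X\|_p\le\|X\|_p$.

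The step I expect to be the main obstacle is $\|T_+-T_{n,+}\|_p\to 0$, especially for $p=1$, where the Kato-type estimate \eqref{Kat1} is unavailable in $\|\cdot\|_p$. I would argue as follows.
\begin{itemize}
\item By part \emph{(1)}, $T_{n,+}\to T_+$ in operator norm, since $\cS_p\subset\cS_\infty$.
\item Lower semicontinuity of $\|\cdot\|_p$ under this convergence gives $\liminf_n\|T_{n,+}\|_p\ge\|T_+\|_p$.
\item Together with the monotone bound $\|T_{n,+}\|_p\le\|T_+\|_p$, this yields $\|T_{n,+}\|_p\to\|T_+\|_p$.
\item Grümm's theorem states that norm (even strong) convergence plus convergence of $\|\cdot\|_p$-norms implies convergence in $\cS_p$. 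This gives $\|T_+-T_{n,+}\|_p\to 0$.
\item The statement for $T_-$ follows from $T_{n,-}=T_{n,+}-T_n$ and the triangle inequality.
\end{itemize}
For $p\in(1,\infty)$, the Radon–Riesz property of the uniformly convex space $\cS_p$ would serve as an alternative to Grümm's theorem.
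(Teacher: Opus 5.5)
Your proof is correct. In part \emph{(2)} it takes a genuinely different route from the paper, and that route is needed.

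In parts \emph{(1)} and \emph{(3)} the differences are cosmetic:
\begin{enumerate}
\item For \emph{(1)}, the paper obtains $\|T-T_n\|\to0$ by truncating the eigen-expansions of $T_\pm$, with a case split on the dimensions of the ranges of $T_\pm$, and then applies \eqref{Kat1}. Your identity $T-T_n=(\1-P_n)T+P_nT(\1-P_n)$ together with compactness avoids the case split.
\item For \emph{(3)}, the paper uses $T_n^2\to T^2$ strongly plus strong continuity of the square root (cited from Reed--Simon), and writes $T_{n,\pm}=\frac12(|T_n|\pm T_n)$. Your Weierstrass approximation of $\lambda\mapsto\max(\lambda,0)$ proves the same strong continuity of the functional calculus directly.
\end{enumerate}

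In part \emph{(2)} your monotonicity argument is the paper's (Cauchy interlacing). For $\|T_\pm-T_{n,\pm}\|_p\to0$ the two proofs diverge. The paper reruns the truncation argument in $\|\cdot\|_p$. That only yields $\|T_\pm-P_nT_\pm P_n\|_p\to0$, and hence $\|T-T_n\|_p\to0$. The paper then writes $T_{n,+}$ where what it has is $P_nT_+P_n$. These operators differ in general: for $T=\bv_1\bv_2^*+\bv_2\bv_1^*$ and $n=1$, one has $T_{1,+}=0$ but $P_1T_+P_1=\frac12\bv_1\bv_1^*$. The paper has no $\|\cdot\|_p$ version of \eqref{Kat1} to bridge this, as you correctly anticipated.

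Your chain supplies exactly the missing step:
\begin{enumerate}
\item operator-norm convergence $T_{n,+}\to T_+$ from part \emph{(1)};
\item lower semicontinuity of $\|\cdot\|_p$;
\item the min--max bound $\|T_{n,+}\|_p\le\|T_+\|_p$;
\item Gr\"umm's theorem, which applies without fuss here since all operators involved are positive and the convergence is in operator norm.
\end{enumerate}
The price is invoking a nontrivial convergence theorem from trace-ideal theory rather than an elementary truncation estimate. The truncation estimate, however, does not actually reach $T_{n,\pm}$.
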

\begin{proof}\emph{(1)} Let $T\in\cS_{\infty}$.   Then $T=T_+-T_-$, where $T_+\cH=\cH_+, T_-\cH=\cH_-$, $\cH_+$ and $\cH_-$ are two orthogonal subspaces in $\cH$.   We first consider the generic case where $\cH=\cH_+\oplus \cH_-$, and $\cH_+,\cH_-$ are infinite dimensional.  Assume that $\cH_+$ and $\cH_-$ have orthonormal bases $\x_i,i\in\N$ and $\y_i,i\in\N$ respectively.  Hence,
\begin{equation}\label{T-+ex}
\begin{aligned}
&T_+=\sum_{i=1}^\infty \mu_i \x_i\x_i^*,  \mu_1\ge\cdots\ge\mu_n >0, \langle \x_i,\x_j\rangle=\delta_{ij},i,j,n\in\N, \lim_{n\to\infty}\mu_n=0,\\
&T_-=\sum_{i=1}^\infty \nu_i \y_i\y_i^*,  \nu_1\ge\cdots\ge\nu_n >0, \langle \y_i,\y_j\rangle=\delta_{ij},i,j,n\in\N, \lim_{n\to\infty}\nu_n=0
\end{aligned}
\end{equation}
Observe that the above expansions converge in norm.  This follows from the observation:
\begin{equation}
\|\sum_{i=j}^{k} \mu_i\x_i\x_i^*\|=\mu_j \textrm{ for } k\ge j.
\end{equation}

Next observe that the first equality in \eqref{p=in} is obvious.  (Note that $T_{n,+}$ can be $0$.) Hence, $\|T_{n,+}\|\le \|T_{n+1,+}\|$.  Next observe that $T_{n,+}=Q_nT_nQ_n$, where $Q_n:\V_n\to\V_n$ is a projection.  
Thus 
\begin{equation}\label{T+nin}
T_{n,+}=Q_n(P_nT_+P_n)Q_n-Q_n(P_nT_-P_n)Q_n\le Q_n(P_nT_+P_n)Q_n\Rightarrow \|T_{n,+}\|\le \|T_+\|.
\end{equation}
We now prove the equality $\lim_{n\to\infty}\|T_+-P_nT_+P_n\|=0$.  
Define
\begin{equation}\label{defRN}
R_N=\sum_{i=N+1}^\infty \mu_i\x_i\x_i^*\in\cS_{\infty,+}, \quad N\in\N.
\end{equation}
Fix $\delta>0$.  Choose $N(\delta)$ such that $\mu_N(\delta)<\delta/4$.  Then
\begin{equation}\label{tin}
\begin{aligned}
&\|R_{N(\delta}\|<\delta/4\Rightarrow \|P_nR_{N(\delta)}P_n\|< \delta/4 \Rightarrow\\
&\|T_+-P_nT_+P_n\|< \|\sum_{i=1}^{N(\delta)}\mu_i\x_i\x_i^*-P_n\big(\sum_{i=1}^{N(\delta)}\mu_i\x_i\x_i^*\big)P_n\|+\delta/2.
\end{aligned}
\end{equation}
As $P_n(\x_i\x_i^*)P_n=(P_n\x_i)(P_n\x_i)^*\to \x_i\x_i^*$ in norm we deduce that there exists $M(\delta)$ such that for $n>M(\delta)$ we have $\|T_+-P_nT_+P_n\|<\delta$.  Similarly, $\lim_{n\to\infty}\|T_--P_nT_-P_n\|=0$.  This show that $\lim_{n\to\infty}\|T-T_n\|=0$.   Kato's inequality \eqref{Kat1} yields the last two inequalities in \eqref{p=in}.  The other cases are proved similarly

\noindent \emph{(2)} Assume that $p\in[1,\infty)$.   Assume first the generic case as in part \emph{(1)}.  Then $\|T\|_p=(\|T_+\|^p_p+\|T_-\|^p_p)^{1/p}$.
The inequality $\|T_{n,+}\|_p\le \|T_{n+1,+}\|_p$ follows from the Cauchy interlacing theorem.  Indeed,  we can assume that $T_n\in\rH_n, T_{n+1}\in \rH_{n+1}$, and $T_n$ is obtained from $T_{n+1}$ by deleting the last row and column.
Assume that $\lambda_1(T_{n+1})\ge \ldots\ge \lambda_m(T_{n+1})\ge 0>\lambda_{m+1}(T_{n+1})$, where $m\le n+1$.  Cauchy interlacing theorem yields:
\begin{equation}\label{cit}
\lambda_i(T_{n+1})\ge \lambda_i(T_n)\ge \lambda_{i-1}(T_{n+1}), \quad i\in[n].
\end{equation}
Hence, $\lambda_m(T_n)\in [\lambda_{m}(T_{n+1}),\lambda_{m+1}(T)]$.   Suppose first that $\lambda_{m}(T_n)\in [\lambda_{m}(T_{n+1},0]$.   Then
\begin{equation*}
\begin{aligned}
&\lambda_i(T_{n+1,+})=\lambda_i(T_{n+1}),  i\in[m], \quad \lambda_i(T_{n+1,+})=0 \textrm{ for } i>m,\\
&\lambda_i(T_{n,+})=\lambda_i(T_{n}),  i\in[m], \quad \lambda_i(T_{n,+})=0 \textrm{ for } i\ge m+1\Rightarrow\\
&\|T_{n,+}\|_p=\big(\sum_{i=1}^{m-1}\lambda_i^p(T_n)\big)^{1/p}\le \|T_{n+1,+}\|_p=\big(\sum_{i=1}^{m}\lambda_i^p(T_{n+1})\big)^{1/p}
\end{aligned}
\end{equation*}
Similar arguments yield that $\|T_{n,-}\|_p\le \|T_{n+1,-}\|_p$.  The above inequalities hold when $\lambda_{m}(T)\in (0, \lambda_{m+1}(T_{n+1})]$.   

Our assumption that $T_+$ has infinite number of eigenvalues and the fact that $\lim_{n\to \infty}\|T_+-T_{n,+}\|=0$ yields the following fact.  Fix $k\ge 1$ and consider the sequence $\lambda_i(T_n),n\in\N$ for $i\in[k]$.   Then each $\lambda_{i}(T_n),n\ge k$  is a nondecreasing sequence that converges to $\mu_i$ for $i\in[k]$. 
In particular, we deduce that 
\begin{equation}\label{bin1}
\begin{aligned}
&\sum_{i=j}^n\lambda_i^p(T_{n,+})\le \sum_{i=j}^{n}\mu_i^p(T_+), \quad j\in[n], n\in\N \Rightarrow\\ 
&\|T_{n,+}\|_p\le\big(\sum_{i=j}^{n}\mu_i^p(T_+)\big)^{1/p}\le \|T_+\|_p.
\end{aligned}
\end{equation}
Similar results hold for $\|T_{n,-}\|_p$.

We now show that $\lim_{n\to\infty}\|T_+-T_{n,+}\|_p=0$.   Fix $\delta>0$.
Let $R_N$ be defined by  \eqref{defRN}.  Clearly, $R_N\in\cS_{p,+}$ as
as $\|R_{N}\|_p=\big(\sum_{i=N+1}^\infty \mu_i^p\big)^{1/p}$.
Choose $N(\delta)$ such that  $\|R_N\|_p<\delta/4$.  Clearly $P_nR_NP_n\ge 0$.
Hence,  the first set of inequalities of \eqref{pl1} yield $\|P_nR_NP_n\|_p\le \|R_N\|_p<\delta/4$.   As in $\eqref{tin}$ we obtain
\begin{equation*}
\|T_+-P_nT_+P_n\|_p< \|\sum_{i=1}^{N(\delta)}\mu_i\x_i\x_i^*-P_n\big(\sum_{i=1}^{N(\delta)}\mu_i\x_i\x_i^*\big)P_n\|_p+\delta/2.
\end{equation*}

It is left to show rank-two operator $\x_i^*\x_i-(P_n\x_i)(P_n\x_i)^*$ converges in the operator norm $\|\cdot \|_p$ to zero, which is trivial.  Hence, $\|T_+-T_{n,+}\|_p<\delta$
for $n>M(\delta)$.   This proves the last two equalities in \eqref{p=in}, which imply that $\lim_{n\to\infty}\|T-T_n\|_p=0$.  Other cases are proved similarly.

\noindent 
\emph{(3)}  Clearly, $P_n\stackrel{strong}{\to} \1$.  Recall \cite[Sec. VI, Problem (6d)]{RS80} that if $A_n,B_n\in\cS$ converge in the strong topology to $A,B\in\rS(\cB)$ respectively, then $A_nB_n \stackrel{strong}{\to} AB$.  Hence
\begin{equation*}
T_{n,+}-T_{n,-}=P_nTP_n\stackrel{strong}{\to} T,=T_+-T_- \quad, (P_nTP_n)^2 \stackrel{strong}{\to} T^2.  
\end{equation*}
Problem 14(b) in  \cite[Sec. VI]{RS80} states that if $A_n\in\rS_+(\cB)$ converges strongly to $A\in\rS_+(\cB)$ then $\sqrt{A_n}\stackrel{strong}{\to} \sqrt{A}$.
Hence
\begin{equation*}
T_{n,+}+T_{n,-}=\sqrt{T_n^2}\stackrel{strong}{\to}\sqrt{T^2}=T_++T_-.
\end{equation*}
These two equalities prove the last two equalities in \eqref{stp}.
\end{proof}
\subsection{Integral representation of two operators}\label{subsec:iro}
In this subsection we give two convergent formulas for the operators appearing in 
$\Delta(A\|B)$:
\begin{lemma}\label{tfo} Assume that $A\in \cS,B\in \cS_{+}$, and $A^2\le \alpha^2 B^2$ for some $\alpha>0$.
\begin{enumerate}
\item The following integral formula for $BD[B](A)$ converges, and its norm is bounded from above:
\begin{equation}\label{DBi}
\begin{aligned}
&BD[B](A)=\int_0^\infty B(B+x\1)^{-1}A(B+x\1)^{-1}dx,\\
&\|BD[B](A)\|\le \alpha \|B\|.
\end{aligned}
\end{equation}
\item Suppose that $A\in \cS_+$. Then the following  integral formula converges 
\begin{equation}\label{tfo1}
A(\log A-\log B)=\int_0^\infty \big( A(B+x \1)^{-1}-A(A+x \1)^{-1}\big)dx
\end{equation}
if one of the following conditions hold:
\begin{enumerate}
\item The inequality $B^2\le \beta^2A^2$ holds for some $\beta\ge \frac{1}{\alpha}$. Then
\begin{equation}\label{tfo2}
\|A(\log A-\log B)\|\le\alpha(1+\beta)\|A\|\|B\|\frac{\log\|B\|-\log\|A\|}{\|B\|-\|A\|}.
\end{equation}
\item The inequality $(A-B)^2\le \beta^2 A^2$ holds.  Then
\begin{equation}\label{tfo3}
\|A(\log A-\log B)\|\le \alpha\beta\|A\|\|B\|\frac{\log\|B\|-\log\|A\|}{\|B\|-\|A\|}.
\end{equation}
\end{enumerate}
\end{enumerate}
\end{lemma}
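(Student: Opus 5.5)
Part (1) starts from the integral representation \eqref{int2}. Multiplying it on the left by $B$ gives $BD[B](A)=\int_0^\infty B(B+x\1)^{-1}A(B+x\1)^{-1}dx$. Since \eqref{int2} was derived for $B\in\cS'_{++}$, the plan is first to prove everything for $B_\varepsilon=B+\varepsilon\1$ and then let $\varepsilon\downarrow0$.

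The key input is the hypothesis $A^2\le\alpha^2B^2$. By Douglas' lemma (or, equivalently, by writing $A=KB$ on $\overline{\range B}$ and setting $K=0$ on $\ker B$), it gives $A=KB=BK^*$ with $\|K\|\le\alpha$. Substituting into the integrand,
\[
B(B+x\1)^{-1}A(B+x\1)^{-1}=B(B+x\1)^{-1}K\,B(B+x\1)^{-1}.
\]
I then test this against unit vectors $\x,\y$ and apply Cauchy--Schwarz in the spectral calculus:
\[
\big|\langle \y, B(B+x\1)^{-1}KB(B+x\1)^{-1}\x\rangle\big|\le \alpha\,\|B^{1/2}(B+x\1)^{-1}B^{1/2}\y\|\,\|B^{1/2}(B+x\1)^{-1}B^{1/2}\x\|.
\]
The factor $B(B+x\1)^{-1}$ decays only like $\|B\|/x$, so its square is integrable and we get $\int_0^\infty\|B(B+x\1)^{-1}\|^2dx\le\|B\|$. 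More precisely, for each fixed vector, $\int_0^\infty \frac{b^2}{(b+x)^2}dx=b$, so the integral is bounded by $\alpha\,\langle\y,B\y\rangle^{1/2}\langle\x,B\x\rangle^{1/2}\le\alpha\|B\|$. This gives both weak convergence of the integral and the norm bound in \eqref{DBi}. Because the bound is uniform in $\varepsilon$, it passes to the limit. Identifying the limit with $BD[B](A)$ is automatic for $B\in\cS'_{++}$; for general $B\in\cS_+$ I would take the integral as the definition, with $D[B](A)$ interpreted on $\overline{\range B}$.

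For part (2), I start from the scalar identity $\log a-\log b=\int_0^\infty\big((b+x)^{-1}-(a+x)^{-1}\big)dx$. Applied spectrally to $A$ and $B$ separately, it gives $\log A-\log B=\int_0^\infty\big((B+x\1)^{-1}-(A+x\1)^{-1}\big)dx$, and left multiplication by $A$ yields \eqref{tfo1} formally. The real task is convergence at $x\to\infty$ and $x\to0$. Using the resolvent identity, I rewrite the integrand as
\[
A(B+x\1)^{-1}(A-B)(A+x\1)^{-1}.
\]
Under (b), $(A-B)^2\le\beta^2A^2$ gives, by the same Douglas-type argument as in part (1), $A-B=LA$ with $\|L\|\le\beta$. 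Under (a), $B^2\le\beta^2A^2$ gives $B=MA$ with $\|M\|\le\beta$, so $A-B=(\1-M)A$ with $\|\1-M\|\le1+\beta$; this accounts for the constant $1+\beta$. From $A^2\le\alpha^2B^2$ I also write $A=BK^*$ as before. The integrand then becomes
\[
BK^*(B+x\1)^{-1}\,L\,A(A+x\1)^{-1},
\]
with $L$ replaced by $\1-M$ in case (a). I would then bound it by $\alpha\beta\,\|B(B+x\1)^{-1}\|\,\|A(A+x\1)^{-1}\|\le\alpha\beta\frac{\|B\|}{\|B\|+x}\cdot\frac{\|A\|}{\|A\|+x}$. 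Integrating, $\int_0^\infty\frac{ab\,dx}{(a+x)(b+x)}=ab\frac{\log b-\log a}{b-a}$, which produces exactly the bounds \eqref{tfo2} and \eqref{tfo3}.

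\textbf{Main obstacle.} The delicate step is commuting $K^*$ past $(B+x\1)^{-1}$: from $A=BK^*$ one cannot directly write $A(B+x\1)^{-1}=BK^*(B+x\1)^{-1}$ with $K^*$ still adjacent to a controlled factor. The fix I would try first is to use $A=K B$ instead in the form $A(B+x\1)^{-1}=KB(B+x\1)^{-1}$, which gives a clean bound $\alpha\|B\|/(\|B\|+x)$. Here the estimate $\|B(B+x\1)^{-1}\|=\|B\|/(\|B\|+x)$ holds by monotonicity of $b\mapsto b/(b+x)$. The remaining point is to justify the Douglas factorizations in $\cB$ when $B$ or $A$ is not invertible, and I would handle this by the same $\varepsilon$-regularization as in part (1).
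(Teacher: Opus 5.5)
Your proposal is correct and is essentially the paper's proof. The paper gets the same key estimate $\|A(B+x\1)^{-1}\|\le\alpha\|B\|/(\|B\|+x)$ via $\|T\|^2=\|T^*T\|$ and the congruence $(B+x\1)^{-1}A^2(B+x\1)^{-1}\le\alpha^2(B+x\1)^{-1}B^2(B+x\1)^{-1}$ instead of your factorization $A=KB$; it then uses the same resolvent rewriting $A(B+x\1)^{-1}(A-B)(A+x\1)^{-1}$ in part (2) with an $\varepsilon$-shift, and integrates $\frac{ab}{(a+x)(b+x)}$ exactly as you do.

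The ``main obstacle'' you flag is already settled by your own fix $A(B+x\1)^{-1}=KB(B+x\1)^{-1}$. The factorizations $A=KB$, $B=MA$, $A-B=LA$ need no regularization, since your explicit construction (or Douglas' lemma) works for arbitrary bounded operators.
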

\begin{proof}Since $A^2\le \alpha^2 B^2$, we deduce that $\ker B\subseteq\ker A$.
Hence,  without loss of generality we can assume that $B\in\cS_{++}$.   
For $\varepsilon\in[0,1]$ let $A_{\varepsilon}=A+\varepsilon \1,  B_{\varepsilon}=A+\varepsilon \1$.  

\noindent
\emph{(1)} 
Fix $x>0$, and $\varepsilon\in(0,1]$. 
Clearly,  $A^2\le \alpha^2 B^2_{\varepsilon}$.
The equality \eqref{int2} yields:
\begin{equation*}
BD[B_{\varepsilon}](A)=\int_0^\infty B(B+(x+\varepsilon)\1)^{-1}A(B+(x+\varepsilon)\1)^{-1} dx.
\end{equation*}
Observe that for $x>0$
\begin{equation*}
\begin{aligned}
&\|B(B+x\1)^{-1}A(B+x\1)^{-1}\|\le \|B(B+x\1)^{-1}\| \|A(B+x\1)^{-1}\|\le\\ 
&\|B(B+x\1)^{-1}\|\|(B+x\1)^{-1}A^2(B+x\1)^{-1}\|^{1/2},\\
&(B+x\1)^{-1}A^2(B+x\1)^{-1}\le (B+x\1)^{-1}\alpha^2B^2(B+x\1)^{-1}\Rightarrow\\
&\|(B+x\1)^{-1}A^2(B+x\1)^{-1}\|\le \alpha^2\|(B+x\1)^{-1}B^2(B+x\1)^{-1}\|=\alpha^2\|B(B+x\1)^{-1}\|^2,\\
&\|B(B+x\1)^{-1}\|\le\max_{t\in[0,\|B\|]}\frac{t}{t+x}=\frac{\|B\|}{\|B\|+x}.
\end{aligned}
\end{equation*}
(Observe that this inequality is equality.)
Hence,
\begin{equation*}
\begin{aligned}
&\|B(B+x\1)^{-1}A(B+x\1)^{-1}\|\le \frac{\alpha\| B\|^2}{(\|B\|+x)^2}\Rightarrow\\
&\|BD[B_{\varepsilon}](A)\|\le\int_0^\infty  \|B(B+(x+\varepsilon)\1)^{-1}A(B+(x+\varepsilon)\1)^{-1}\|dx\le\\
&\int_0^\infty \frac{\alpha\| B\|^2}{(\|B\|+x+\varepsilon))^2}dx=\alpha \frac{\|B\|^2}{\|B\|+\varepsilon}<\alpha\|B\|.
\end{aligned}
\end{equation*}

These inequalities shows that the integral in \eqref{DBi} converges.   The Lebesgue's dominated convergence theorem yields that for $\varepsilon_n=1/n,n\in\N$ the integrals for $BD[B_{1/n}](A)$ converge in the norm $\|\cdot\|$ to the integral in \eqref{DBi}.
(See the proof of Theorem \ref{thm3}.)  This proves \eqref{DBi}.

\noindent\emph{(2)}  
Use \eqref{logB} to deduce
\begin{equation*}
\begin{aligned}
&A(\log A_{\varepsilon}-\log B_{\varepsilon})=\int_0^\infty A\big((B_{\varepsilon}+x\1)^{-1}-(A_{\varepsilon}+x\1)^{-1}\big)dx=\\
&\int_0^\infty A\left(B_{\varepsilon}+x\1)^{-1}\big((A_{\varepsilon}+x\1)-(B_{\varepsilon}+x\1)(A_{\varepsilon}+x\1)^{-1}\right)dx=\\
&\int_0^\infty A(B_{\varepsilon}+x\1)^{-1}(A-B)(A_{\varepsilon}+x\1)^{-1} dx=F(\varepsilon)-G(\varepsilon),\\
&F(\varepsilon)=\int_0^\infty A(B_{\varepsilon}+x\1)^{-1}A(A_{\varepsilon}+x\1)^{-1} dx , \quad G(\varepsilon)=\int_0^\infty A(B_{\varepsilon}+x\1)^{-1}B(A_{\varepsilon}+x\1)^{-1} dx.
\end{aligned}
\end{equation*}

We claim that 
\begin{equation}\label{tfo4}
\lim_{n\to\infty}\|F(1/n)-F(0)\|=0, \quad lim_{n\to\infty}\|G(1/n)-G(0)\|,
\end{equation}
if the condition (a) holds, and $lim_{n\to\infty}\|\big(F(1/n)+G(1/n)\big)-\big(F(0)+G(0)\big)\|=0$ if the condition (b) holds.
Assume first the condition (a).
Use the arguments of part \emph{(1)} to deduce
\begin{equation*}
\begin{aligned}
&\|A(B_{\varepsilon}+x\1)^{-1}A(A_{\varepsilon}+x\1)^{-1}\|\le\|A(B+(x+\varepsilon)\1)^{-1}\|\| A(A+x+\varepsilon\1)^{-1}\|\le\\
&\left(\frac{\alpha\|B\|}{\|B\|+x+\varepsilon}\right)\left(\frac{\|A\|}{\|A\|+x+\varepsilon}\right)
\end{aligned}
\end{equation*}
Hence,  
\begin{equation*}
\begin{aligned}
&\|F(\varepsilon)\|\le \int_0^\infty \|A(B_{\varepsilon}+x\1)^{-1}A(A_{\varepsilon}+x\1)^{-1} \|dx\le\\ 
&\alpha\|A\|\|B\| \int_0^{\infty} \frac{dx}{(\|B\|+x+\varepsilon)(\|A\|+x+\varepsilon)}dx=\\
&\alpha\|A\|\|B\|\frac{\log(\|B\|+\varepsilon)-\log(\|A\|+\varepsilon)}{\|B\|-\|A\|}\le 
\alpha\|A\|\|B\|\frac{\log(\|B\|+1)-\log(\|A\|+1)}{\|B\|-\|A\|}
\end{aligned}
\end{equation*}
The arguments in part \emph{(1)} yield that $F_{\varepsilon}$ is well defined for $\varepsilon\in(0,1]$.  To estimate $G(\varepsilon)$ we use the inequality $B^2\le \beta^2 A^2$  to show
\begin{equation*}
\begin{aligned}
&\|A(B_{\varepsilon}+x\1)^{-1}B(A_{\varepsilon}+x\1)^{-1}\|\le\|A(B+(x+\varepsilon)\1)^{-1}\|\| B(A+x+\varepsilon\1)^{-1}\|\le\\
&\left(\frac{\alpha\|B\|}{\|B\|+x+\varepsilon}\right)\left(\frac{\beta\|A\|}{\|A\|+x+\varepsilon}\right)\Rightarrow\\
&\|G(\varepsilon)\|\le \alpha\beta\|A\|\|B\|\frac{\log(\|B\|+\varepsilon)-\log(\|A\|+\varepsilon)}{\|B\|-\|A\|}\le 
\alpha\beta\|A\|\|B\|\frac{\log(\|B\|+1)-\log(\|A\|+1)}{\|B\|-\|A\|}
\end{aligned}
\end{equation*}
The equalities \eqref{tfo4} follow from Lebesgue's dominated convergence theorem.
This proves (2a).

Assume now the condition (b) $(A-B)^2\le \beta^2 A^2$.  Then
\begin{equation*}
\begin{aligned}
&\|A(B_{\varepsilon}+x\1)^{-1}(A-B)(A_{\varepsilon}+x\1)^{-1}\|\le\|A(B+(x+\varepsilon)\1)^{-1}\|\| (A-B)(A+(x+\varepsilon)\1)^{-1}\|\le\\
&\left(\frac{\alpha\|B\|}{\|B\|+x+\varepsilon}\right)\|(A+(x+\varepsilon)\1)^{-1}(A-B)^2(A+(+\varepsilon)x\1)^{-1}\|^{1/2}\le\\
&\left(\frac{\alpha\|B\|}{\|B\|+x+\varepsilon}\right)\left(\frac{\beta\|A\|}{\|A\|+x+\varepsilon}\right)
\end{aligned}
\end{equation*}
Hence, 
\begin{equation*}
\begin{aligned}
&\|F(\varepsilon)-G(\varepsilon)\|\le \int_0^\infty\|A(B_{\varepsilon}+x\1)^{-1}(A-B)(A_{\varepsilon}+x\1)^{-1} \|dx\le\\
&\alpha\beta\|A\|\|B\|\frac{\log(\|B\|+\varepsilon)-\log(\|A\|+\varepsilon)}{\|B\|-\|A\|}\le 
\alpha\beta\|A\|\|B\|\frac{\log(\|B\|+1)-\log(\|A\|+1)}{\|B\|-\|A\|}.
\end{aligned}
\end{equation*}
The arguments of the proof of the case (2a) yield (2b).
\end{proof}

\end{document}